\title{Self-stabilizing processes \\
}
\author{K.J. Falconer \\
\small{{\em Mathematical Institute,
University of St~Andrews, North Haugh, St~Andrews,}} \\
\small{{\em Fife, KY16~9SS, Scotland }} \\
\small{ and } \\
J. L\'{e}vy V\'{e}hel \\
\small{{\em Case Law Analytics \& Inria,}} \\
\small{{\em Universit\'{e} Nantes, Laboratoire de Math\'{e}matiques 
Jean Leray}}\\
\small{{\em2 Rue de la Houssini\`{e}re - BP 92208 - F-44322 Nantes Cedex, France}}}
\newcommand{\E}{\mathbb{E}}
\renewcommand{\P}{\mathbb{P}}
\newcommand\ed{\stackrel{{\rm dist}}{=}}
\newcommand\tod{\stackrel{{\rm dist}}{\rightarrow}}
\newcommand{\be}{\begin{equation}} 
\newcommand{\ee}{\end{equation}} 
\newcommand\X{{\sf X}}
\newcommand\Y{{\sf Y}}
\newcommand\Pt{{\sf P}}
 \newtheorem{theo}{Theorem}[section]
 \newtheorem{cor}[theo]{Corollary}
 \newtheorem{lem}[theo]{Lemma}
 \newtheorem{prop}[theo]{Proposition}
\begin{document}
\maketitle

\begin{abstract}
\noindent We construct `self-stabilizing' processes $\{Z(t), t\in [t_0,t_1)\}$. These are random processes which when `localized', that is scaled around $t$ to a fine limit, have the distribution of an $\alpha(Z(t))$-stable process, where $\alpha$ is some given function on $\mathbb{R}$. Thus the stability index at $t$ depends on the value of the process at $t$. Here we address the case where $\alpha: \mathbb{R} \to (0,1)$. We first construct deterministic functions which satisfy a kind of autoregressive property involving sums over a plane point set $\Pi$. Taking $\Pi$ to be a Poisson point process then defines a random pure jump process, which we show has the desired  localized distributions.
 \end{abstract}

\section{Introduction and background}
\setcounter{equation}{0}
\setcounter{theo}{0}
The irregularity of classes of functions or stochastic processes may be described by various parameters. A H\"{o}lder exponent is often used, for example the classical Weierstrass function  $f(t) = \sum_{n=1}^\infty \lambda^{-nh} \sin(\lambda^nt) \ (\lambda>2, h>0)$ has constant H\"{o}lder exponent $h$, as does, almost surely, index-$h$ fractional Brownian motion. For $\alpha$-stable processes, the stability index $\alpha$ controls the intensity of  jumps. For such functions and processes, these parameters may be set depending on the application in mind. Nevertheless, in many situations the irregularity may change with time, for example when modelling financial markets where the volatility can vary widely. Thus it is natural to construct functions and processes where the parameter, say $h(t)$, depends on time $t$, and close to each time $t$ the process behaves as if the parameter essentially equals $h(t)$. This can be formalised in terms of  the scaling limit of the process about $t$, a notion termed `localizability'. Examples  include the generalized Weierstrass function with variable H\"{o}lder exponent $h(t)$, see \cite{DLM},  multifractional Brownian motion with variable index $h(t)$, see  \cite{AL,Ben,LP},  multistable processes, with variable $\alpha(t)$, see  \cite{FL,FLL,XFL,LLA,LLL,LL}, and multistable subordinators and multifractional Poisson processes, see \cite{MR}.

It has been observed that, for certain phenomena, the local irregularity of measurements may be related to their amplitude \cite{BEL,ELP}. This relationship might be expressed by  a function $\varphi$ such that the irregularity exponent $h$ of a function or process $f$ at time $t$ depends on  $f(t)$ in a way determined by $\varphi$, that is near to time $t$ the irregularity parameter is close to $h\big(\varphi(f(t))\big)$. Such functions are termed {\it self-regulating}, and self-regulating versions of the Weierstrass function, of multifractional Brownian motion and of a midpoint displacement process  have been constructed in  \cite{BEL,ELP}. 

The aim of this paper is to construct jump processes of a self-regulating nature. We introduce `self-stabilizing' processes, that is variants on $\alpha$-stable processes where the stability index $\alpha$ around time $t$ depends on the value of the process at time  $t$. The construction utilizes the Poisson sum representation of $\alpha$-stable processes as a sum over a point set in the plane. We first recall some basic constructions of stable processes. 
\medskip

{\it Symmetric $\alpha$-stable L\'{e}vy motion} $\{L_\alpha (t), t\geq 0\} (0<\alpha \leq 2)$,  is the stochastic process  with stationary independent increments such that $L_\alpha(0) = 0$ almost surely, and 
$L_\alpha (t) - L_\alpha (s)$ has the distribution of $S_\alpha((t-s)^{1/\alpha},0,0)$,
where $S_\alpha(c, \beta,\mu)$ denotes a stable random variable with stability-index $\alpha$, with  scale parameter $c$, skewness parameter $\beta$, and shift $\mu$. We summarize the relevant features of such processes; a detailed account may be found in \cite{Ber,EM,Bk_Sam}.
The stable motion $L_\alpha $ has stationary increments and is $1/\alpha$-self-similar in the sense that $L_\alpha (c t)$ and $c^{1/\alpha}L_\alpha (t)$ have the same law. There is a version of  $L_\alpha $ such that its sample paths are c\`{a}dl\`{a}g, that is right continuous with left limits. Throughout the paper we write 
$$r^{\langle s\rangle} = {\rm sign}(r)|r|^s, \quad r\in \mathbb{R}, s\in \mathbb{R}.$$
Then symmetric $\alpha$-stable L\'{e}vy motion has  a representation
\begin{equation}\label{sum}
L_\alpha  (t) = C_\alpha 
\sum_{(\X,\Y) \in \Pi} 1_{(0,t]}(\X) \Y^{\langle-1/\alpha\rangle}
\end{equation}
where $C_\alpha$ is a normalising constant given by
\begin{equation}\label{calpha}
C_\alpha = \Big(\int_0^\infty u^{-\alpha} \sin u\, du \Big)^{-1/\alpha},
\end{equation}
and where $\Pi$  is a Poisson point process on $\mathbb{R}^+ \times\mathbb{R}$  with plane Lebesgue measure $\mathcal{L}^2$ as mean measure, so that for a Borel set $A \subset \mathbb{R}^+ \times\mathbb{R}$ the number of points of $\Pi$ in $A$ is a Poisson process with  parameter $\mathcal{L}^2(A)$, independently for disjoint sets $A$, see \cite[Section 3.12]{Bk_Sam}. The sum \eqref{sum} is almost surely absolutely convergent if $0<\alpha<1$, but if $1\leq \alpha<2$ then \eqref{sum}  must be taken as the almost surely convergent limit as $n\to \infty$ of the sum over $\{(\X,\Y)\in \Pi : |\Y|\leq n\}$.
\medskip

{\it Multistable L\'{e}vy motion} $\{M_{\alpha} (t), t\geq 0\}$ is a variant that allows the stability index $\alpha$ in  \eqref{sum}  to vary with $t$. 
This may be done in two distinct ways. Given a suitable 
$\alpha: \mathbb{R}^+ \to (0,2)$, we can either let
$$
M_{\alpha}(t) =  
\sum_{(\X,\Y) \in \Pi} 1_{(0,t]}(\X) C_{\alpha(\X)} \Y^{\langle-1/\alpha(\X)\rangle},
$$
or we can define 
\begin{equation}\label{mtilde}
\widetilde{M}_{\alpha}(t) = C_{\alpha(t)} 
\sum_{(\X,\Y) \in \Pi} 1_{(0,t]}(\X) \Y^{\langle-1/\alpha(t)\rangle}.
\end{equation}
Then $M$ is a Markov process but $\widetilde{M}$ is not, although they are both
semi-martingales. Properties of multistable L\'{e}vy motions have been
investigated in \cite{FL,FLL,KFLL,XFL,LLA,LLL,LL}. 

In particular, under certain conditions $M_{\alpha}$ and $\widetilde{M}_{\alpha}$
are {\it localizable} 
\cite{Fal5,Fal6}, in the sense that near $t$ they `look like' $\alpha(t)$-stable processes, that is for each $t>0$ and $u \in \mathbb{R}$,
$$\frac{M_{\alpha}(t+ru) -M_{\alpha}(t)}{r^{1/\alpha(t)}} \tod L_{\alpha(t)}(u)$$
as $r\searrow 0$, where convergence is in distribution with respect to the Skorohod metric and consequently  in finite dimensional distributions, with the
same holding for $\widetilde{M}_{\alpha}$. 

With multistable motion, the local stability parameter depends on the time $t$. Our aim in this paper is to construct a process where the local stability parameter at time $t$ depends instead on the value of the process at time $t$. Such
a process might be termed `self-stabilizing'. 

Thus we would like, for suitable 
$\alpha: \mathbb{R} \to (0,2)$, to construct a process $\{Z(t)\equiv Z_{\alpha} (t), t\in [t_0,t_1)\}$ that is localizable in the sense that for all $t\in [t_0,t_1)$ and $u>0$
\begin{equation*}
\frac{Z(t+ru) -Z(t)}{r^{1/\alpha(Z (t))}}\bigg|\,  \mathcal{F}_t \  \tod \ L^0_{\alpha(Z (t))}(u)
\end{equation*}
as $r\searrow 0$,
where convergence is in  finite dimensional distributions and in distribution, and where $\mathcal{F}_t$ indicates conditioning on the process up to time $t$. (For notational simplicity it
is easier to construct $Z_{\alpha}$ with  local form as the non-normalised $\alpha$-stable processes 
$L^0_\alpha = C_\alpha^{-1} L_\alpha$.)

We achieve this in the case of  $\alpha: \mathbb{R} \to (0,1)$  in Theorem \ref{thmrand} by using Poisson sums to show that there exists a unique process such that 
\begin{equation*}
Z  (t) = a_0 + \sum_{(\X,\Y) \in \Pi} 1_{(t_0,t]}(\X)\, \Y^{\langle-1/\alpha(Z  (\X_-))\rangle} \qquad (t_0\leq t <t_1).
\end{equation*}
and then showing that this process has the desired localizability property in Theorem \ref{rtlocx}. We  first obtain a corresponding identity in a deterministic setting in Section \ref{sec:deter} and then extend this to the random setting in Section \ref{sec:stoch}. The case of   $\alpha: \mathbb{R} \to (0,2)$, where the sums need not be absolutely convergent needs an alternative approach, and we address this in a sequel paper \cite{FLV3}.

\section{Deterministic jump functions defined by plane point sets}\label{sec:deter}
\setcounter{equation}{0}
\setcounter{theo}{0}

This section is entirely deterministic. Given a countable discrete point set $\Pi$ in the plane we will construct real valued functions $f$ on an interval  $[t_0, t_1)$ such that $f(t)$  `jumps' when $t=x$ 
for each $(x,y) \in \Pi$, the magnitude of the jump depending both on $y$ and on the value of $\lim_{t\nearrow x} f(t)$. Thus the  jump behaviour of $f$ depends on the values of $f$ itself.

For $t_0<t_1$ let $D[t_0, t_1)$ denote the c\`{a}dl\`{a}g functions on $[t_0, t_1)$, that is functions $f$ that are right continuous, so $\lim_{t\searrow u}f(t) =f(u)$ for all $u\in [t_0, t_1)$, and have left limits, so the limit $f(u_{-}) : =\lim_{t\nearrow u}f(t)$ exists for all $u\in (t_0, t_1]$; note in particular that we require the left limit to exist at $t_1$. The space  $D[t_0, t_1)$ is complete under the supremum norm $\|\cdot\|_\infty$. Our results are also valid,  by trivial extension, on $D[t_0, t_1]$, but a half-open interval is more natural and convenient  when working with c\`{a}dl\`{a}g functions.  

Throughout this section, fix $0<a <b<1 $. Let  $\alpha:\mathbb{R} \to [a,b]$ be continuously differentiable with bounded derivative and let $a_0 \in \mathbb{R}$.
Let $\Pi \subset (t_0,t_1)\times \mathbb{R}$ be a set of points such that 
\be\label{fin}
 \sum_{(x,y) \in \Pi} |y|^{-1/b'}< \infty
\ee
for some $b<b'<1$; this will ensure convergence in \eqref{abconv} below.

Our aim in this section is to show that, given $\alpha$ and $\Pi$, there is a unique $f\in D[t_0, t_1)$ satisfying
\begin{equation}
f(t) = a_0 +\sum_{(x,y) \in \Pi} 1_{(t_0,t]}(x) y^{\langle-1/\alpha(f(x_-))\rangle}. \label{identint}
\end{equation}
We will obtain  \eqref{identint} by two methods which provide different insights and lead to different properties of $f$. First we will use a method based on Banach's contraction mapping theorem, and then we will give a constructive proof where $f$ is approximated by sums over finite point sets.

We will often need the following estimates. By the mean value theorem,
$$ y^{\langle-1/\alpha(v)\rangle} - y^{\langle-1/\alpha(u)\rangle}
 = (v- u)
y^{\langle-1/\alpha(\xi)\rangle}\log |y|\frac{\alpha'(\xi)}{\alpha(\xi)^2}\quad (y,u,v \in \mathbb{R} ),$$
where $\xi \in (u,v)$. In particular, 
\be 
\big| y^{\langle-1/\alpha(v)\rangle} - y^{\langle-1/\alpha(u)\rangle}\big |
 \leq   M\  |v- u| |y|^{-1/(a,b)}\quad (y,u,v \in \mathbb{R} ),\label{ydif}
 \ee
where for convenience we write
\be
|y|^{-1/(a,b)} = \max\big\{ |y|^{-1/a}\big(1+\big|\log|y|\big|\big), |y|^{-1/b}\big(1+\big|\log|y|\big|\big)\big\};\label{ydif1}
\ee
and
\be
M = \sup_{\xi\in\mathbb{R}} \frac{|\alpha'(\xi)|}{\alpha(\xi)^2} \label{ydif1a}
\ee
From  \eqref{fin} and \eqref{ydif1}
\be
\sum_{(x,y) \in \Pi} |y|^{-1/(a,b)}< \infty. \label{abconv}
\ee

\subsection{Contraction approach}
In this section we use Banach's contraction mapping theorem to show that \eqref{identint} has a unique solution.
Given $a_0, \alpha$ and $\Pi$ as above,  define an operator $K$ on $D[t_0,t_1)$ by
\be\label{op}
K(f)(t) = a_0 +\sum_{(x,y) \in \Pi} 1_{(t_0,t]}(x) y^{\langle-1/\alpha(f(x_-))\rangle} \qquad (t_0 \leq t <t_1),
\ee
where the sum is absolutely convergent by \eqref{fin}. We need to check that $K$ is indeed an operator on $D[t_0,t_1)$.

\begin{lem}
The operator $K$ maps $D[t_0,t_1)$ into itself.
\end{lem}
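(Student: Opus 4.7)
The plan is to verify the three defining properties of $D[t_0,t_1)$ for $K(f)$: absolute convergence of the sum at each $t$, right continuity at every $u \in [t_0,t_1)$, and existence of left limits at every $u \in (t_0,t_1]$. The key input is that for any $v \in \mathbb{R}$ we have $\alpha(v) \in [a,b] \subset (0,1)$, so
$$\big| y^{\langle -1/\alpha(v)\rangle}\big| = |y|^{-1/\alpha(v)} \leq \max\big(|y|^{-1/a}, |y|^{-1/b}\big) \leq |y|^{-1/(a,b)},$$
a bound uniform in $v$ and summable over $\Pi$ by \eqref{abconv}. This single dominating function drives all three verifications.

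First I would fix $t \in [t_0,t_1)$ and $f \in D[t_0,t_1)$; the pointwise bound above plus \eqref{abconv} shows $K(f)(t)$ is a well-defined, absolutely convergent real number. Then for right continuity at $u \in [t_0,t_1)$, I would write, for $t > u$,
$$K(f)(t) - K(f)(u) = \sum_{(x,y)\in\Pi} 1_{(u,t]}(x)\, y^{\langle -1/\alpha(f(x_-))\rangle}.$$
For each fixed $(x,y) \in \Pi$ with $x > u$, the indicator $1_{(u,t]}(x)$ vanishes as soon as $t<x$, and for $x \leq u$ the indicator is identically $0$; hence each summand tends to $0$ as $t \searrow u$. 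The dominating summable bound $|y|^{-1/(a,b)}$ then gives right continuity by dominated convergence.

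For the existence of left limits at $u \in (t_0, t_1]$, I would similarly observe that for $t < u$,
$$K(f)(t) = a_0 + \sum_{(x,y)\in\Pi} 1_{(t_0,t]}(x)\, y^{\langle -1/\alpha(f(x_-))\rangle},$$
and that $1_{(t_0,t]}(x) \to 1_{(t_0,u)}(x)$ as $t \nearrow u$ for every $x \neq u$ (and trivially for $x=u$ since that singleton contributes to neither limit). Dominated convergence with the same majorant yields
$$\lim_{t \nearrow u} K(f)(t) = a_0 + \sum_{(x,y)\in\Pi} 1_{(t_0,u)}(x)\, y^{\langle -1/\alpha(f(x_-))\rangle},$$
which exists as a finite real number, completing the proof that $K(f) \in D[t_0,t_1)$.

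There is no real obstacle here: the argument is a routine application of dominated convergence once the uniform-in-$v$ bound $|y|^{-1/\alpha(v)} \leq |y|^{-1/(a,b)}$ is recorded. The only point that needs a second's care is to use $|y|^{-1/(a,b)}$ (rather than $|y|^{-1/b'}$ from \eqref{fin}) as the majorant, since the exponent $1/\alpha(v)$ can be as large as $1/a$ when $|y| < 1$; but this is exactly what \eqref{ydif1}–\eqref{abconv} were set up to handle.
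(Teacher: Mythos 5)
Your proof is correct and follows essentially the same route as the paper: the paper packages the sum as an absolutely finite (atomic) signed measure and invokes continuity of that measure along the shrinking intervals $(t,t+h]$ and $(t-h,t]$, which is exactly the dominated-convergence argument you carry out explicitly with the summable majorant $|y|^{-1/(a,b)}$ from \eqref{abconv}. Your explicit identification of the left limit as $a_0+\sum_{(x,y)\in\Pi}1_{(t_0,u)}(x)\,y^{\langle-1/\alpha(f(x_-))\rangle}$ matches the paper's formula $K(f)(t)-\sum_{\{(x,y)\in\Pi\,:\,x=t\}}y^{\langle-1/\alpha(f(x_-))\rangle}$, so nothing is missing.
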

\begin{proof} 
The set function $\mu(A) := \sum_{(x,y) \in \Pi} 1_{A}(x) y^{\langle-1/\alpha(f(x_-))\rangle}$ defines an absolutely finite signed measure on $[t_0,t_1)$, so in particular the continuity properties hold for this measure.

Let  $t_0\leq t <t+h< t_1$. As $h\searrow0$,
$$
K(f)(t+h) - K(f)(t) = \sum_{(x,y) \in \Pi} 1_{(t,t+h]}(x) y^{\langle-1/\alpha(f(x_-))\rangle} \to 0$$
since $\bigcap_{h>0} (t,t+h] = \emptyset$, so $K(f)$ is right continuous at all $t\in [t_0,t_1)$.

Now let   $t_0\leq t-h<t \leq t_1$. As $h\searrow0$,
$$
K(f)(t-h) = K(f)(t)\ -  \! \! \sum_{(x,y) \in \Pi} 1_{(t-h,t]}(x) y^{\langle-1/\alpha(f(x_-))\rangle} \to K(f)(t) \ -  \! \! \! \! \! \sum_{\{(x,y) \in \Pi\, : \, x=t\}} y^{\langle-1/\alpha(f(x_-))\rangle}
$$
since $\bigcap_{h>0} (t-h,t] =\{t\}$, so $K(f)$ has a left limit at $t$.
\end{proof}
\medskip

We would like to use  that  $K$ is a contracting operator on  $D[t_0,t_1)$ and apply Banach's contraction theorem. However, $K$ is  contracting only if the value of $|y|$ is not too small at points $(x,y)\in\Pi$. We make this assumption in part (a) of the proof, then in part (b) we apply this to the intervals between such `bad' $x$ and incorporate the jumps at these points directly.

\begin{theo}\label{thmdet}
With $a_0, \alpha$ and $\Pi$ as above, there exists a unique $f \in D[t_0,t_1)$ such that 
\begin{equation}\label{ident}
f(t) = a_0 +\sum_{(x,y) \in \Pi} 1_{(t_0,t]}(x) y^{\langle-1/\alpha(f(x_-))\rangle} \qquad (t_0\leq t < t_1).
\end{equation}
In particular $f(t_0) =a_0$. Moreover, for each $s$ and $t$ with $t_0\leq s<t<t_1$, $f(t)$ is completely determined given $f(s)$ and the points of the set $\Pi \cap ((s,t] \times \mathbb{R})$.
\end{theo}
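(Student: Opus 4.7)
My plan is to apply Banach's fixed point theorem to the operator $K$ of \eqref{op}, viewed as a self-map of the Banach space $D[t_0,t_1)$ under the supremum norm. The basic contractivity input is the estimate \eqref{ydif}, which yields for any $f,g \in D[t_0,t_1)$ and any $t$,
\[
|K(f)(t) - K(g)(t)| \leq M \sum_{(x,y)\in\Pi,\, x\in(t_0,t]} |y|^{-1/(a,b)}\,|f(x_-)-g(x_-)|.
\]
When $M\sum_{(x,y)\in\Pi}|y|^{-1/(a,b)} \leq \theta < 1$, this gives $\|K(f)-K(g)\|_\infty \leq \theta\|f-g\|_\infty$, so $K$ is a $\theta$-contraction and Banach yields the unique fixed point sought in \eqref{ident}. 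This is the favourable "small sum" setting that I would establish as part (a).

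For a general $\Pi$ the global contraction argument fails in two ways: the total sum can exceed $1/M$, and, worse, an individual term $M|y|^{-1/(a,b)}$ can be arbitrarily large when $|y|$ is tiny. The key observation, drawn from \eqref{abconv}, is that for any fixed $\epsilon>0$ only finitely many points $(x_i,y_i)\in\Pi$ have $|y_i|^{-1/(a,b)}>\epsilon$; I would call these the \emph{bad} points. I would then choose a finite partition $t_0=s_0<s_1<\cdots<s_N=t_1$ placing every bad point (and any other point of $\Pi$ that lands on a breakpoint) among the $s_j$, and refine it so that $M\sum_{(x,y)\in\Pi,\,x\in(s_{j-1},s_j)}|y|^{-1/(a,b)} \leq \tfrac{1}{2}$ on every open sub-interval. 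This is possible because the remaining "good" contribution is an absolutely convergent sum whose tails can be split arbitrarily finely.

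With such a partition in hand I would build $f$ recursively: set $f(t_0)=a_0$; given $f$ on $[t_0,s_{j-1}]$, apply part (a) on $[s_{j-1},s_j)$ with initial value $f(s_{j-1})$ and restricted point set $\Pi\cap((s_{j-1},s_j)\times\mathbb{R})$ to determine $f$ up to the left limit $f(s_j-)$; then incorporate any jump at the breakpoint directly by $f(s_j) = f(s_j-) + y^{\langle -1/\alpha(f(s_j-))\rangle}$ whenever $(s_j,y)\in\Pi$. The main obstacle is precisely the bad-point issue: no global contraction argument can survive an interval containing one, so such jumps must be excised to partition endpoints and handled explicitly. Uniqueness follows because any other solution must agree with the constructed $f$ on each open sub-interval by part (a) and at each breakpoint by the explicit jump rule, and the final claim that $f(t)$ is determined by $f(s)$ and $\Pi\cap((s,t]\times\mathbb{R})$ is then immediate by restarting the whole construction on $[s,t_1)$ with $f(s)$ in place of $a_0$.
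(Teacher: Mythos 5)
Your proposal is correct and follows essentially the same strategy as the paper's proof: a Banach contraction argument based on \eqref{ydif} on pieces where the relevant sum is small, explicit incorporation of the finitely many exceptional jumps at partition points, patching by induction, piecewise uniqueness, and restarting the construction at $s$ with initial value $f(s)$ for the final claim. The one real difference is how the exceptional set is chosen. The paper fixes $N$ with $M\sum_{(x,y)\in\Pi,\,|y|\geq N}|y|^{-1/(a,b)}<\tfrac12$ (its condition \eqref{cond}); after excising the finitely many points with $|y|\leq N$, the contraction constant is automatically below $\tfrac12$ on every intermediate time interval, so no further refinement of the time partition is needed. You instead remove the finitely many points whose individual terms exceed $\epsilon$ and then must refine the partition so that each open sub-interval carries mass at most $1/(2M)$; this claim is true, but it is the one step in your sketch that actually needs an argument (a greedy choice of breakpoints works, using that $\nu((s,s+\delta))\to 0$ as $\delta\searrow 0$ for the finite measure $\nu=\sum_{(x,y)\in\Pi}|y|^{-1/(a,b)}\delta_{x}$), and you should note that mass can concentrate at a single abscissa, since several points of $\Pi$ may share one $x$-value: such an abscissa must itself become a breakpoint, and the jump there is the absolutely convergent sum over all $(s_j,y)\in\Pi$, not the single term you wrote. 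The paper's threshold-by-$|y|$ choice buys exactly this simplification, sidestepping the partition lemma and the atom issue at no loss of generality, while your variant is marginally more flexible but requires that extra care.
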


\begin{proof} 
Let  $N\geq 1$ be a number chosen so that
 \begin{equation}\label{cond}
 M \sum_{(x,y) \in \Pi, |y|\geq N} |y|^{-1/(a,b)}< \frac{1}{2}, 
  \end{equation}
where $M$ is given by \eqref{ydif1a}. (Note that $\frac{1}{2}$ will be a contraction constant and could be replaced by any $0<k<1$.) We split the proof into two parts.

(a) First we establish a function satisfying \eqref{ident} under the assumption that  
 $|y|>N\geq 1$ for all $(x,y) \in \Pi$. 
Let  $K : D[t_0,t_1) \to D[t_0,t_1)$ be as in \eqref{op}.
For  $f,g \in D[t_0,t_1)$, 
\begin{eqnarray*}
\big| K(f)(t)  - K(g)(t)\big| & =& \Big|  \sum_{(x,y) \in \Pi} 1_{(t_0,t]}(x) \big[ y^{\langle-1/\alpha(f(x_-))\rangle} - y^{\langle-1/\alpha(g(x_-))\rangle}\big]\Big|\\
& \leq&  M\sum_{(x,y) \in \Pi} 1_{(t_0,t]}(x)  \big|f(x_-)- g(x_-)\big|
|y|^{-1/(a,b)},
\end{eqnarray*}
using \eqref{ydif}. 
As $|y|>N$ for all $(x,y) \in \Pi$, together with  \eqref{cond} this implies
$$\| K(f) - K(g)\|_\infty \leq {\textstyle\frac{1}{2}} \| f - g\|_\infty.$$
Since $(D[t_0,t_1),\|\cdot\|_\infty)$ is complete, Banach's  contraction mapping theorem gives a unique $f \in D[t_0,t_1)$ satisfying $K(f)(t) = f(t) $  for $t_0\leq  t<t_1$, that is satisfying \eqref{ident}.

\medskip

\noindent (b) We now dispense with the requirement that   $|y|>N$ for all $(x,y) \in \Pi$. 
The set $\{x: (x,y) \in \Pi : |y| \leq N\}$  is finite, and we number these $x$ so that
$t_0< x_1\leq x_2 \leq \cdots \leq x_n< t_1$. We will apply part (a) inductively  on the intervals between successive $x_i$.

Part  (a) with $t_1$ replaced by $x_1$ gives a function $f \in D[t_0,x_1)$ satisfying \eqref{ident} for $t_0\leq t<x_1$, to start the induction. Assume inductively that there exists $f \in D[t_0,x_k)$ satisfying \eqref{ident} for $t_0\leq t<x_k$, where $1\leq k < n$; we extend  $f$ to $D[t_0,x_{k+1})$.
Since $f \in D[t_0,x_k)$ the limit $f({x_k}_{-}) =\lim_{t\nearrow x_k}  f(t)$ exists.
Define 
\be\label{addterm}
f(x_k) = f({x_k}_{-}) +  \sum_{(x,y)\in \Pi\,:\, x=x_k} {y}^{\langle-1/\alpha(f({x_k}_{-}))\rangle};
\ee
for `typical' sets $\Pi$ there will be a single term in this sum.  Note that $|y|>N$ for all $\{(x,y) \in \Pi\ : x_{k}<x< x_{k+1}\}$ and that \eqref{cond} remains valid with $\Pi$ replaced by  this subset. Thus we may apply part (a) with $\{t_0,t_1\}$ replaced by $\{x_k,x_{k+1}\}$ taking $a_0 = f({x_k})$, to get  $f \in D[x_{k},x_{k+1})$ such that for $x_{k}\leq t <x_{k+1}$
\begin{eqnarray*}
f(t) &=& f({x_k}) \ +\sum_{ (x,y) \in \Pi\, :\, x_{k}< x< x_{k+1}} 1_{(x_k,t]}(x) y^{\langle-1/\alpha(f(x_-))\rangle} \\
&=& \lim_{t'\nearrow x_k}\Big[a_0 \ + \sum_{ (x,y) \in \Pi\, :\, t_0 < x< x_{k}} 1_{(t_0,t']}(x) y^{\langle-1/\alpha(f(x_-))\rangle}\Big]\\
&& \qquad+\sum_{(x,y)\in \Pi\,:\, x=x_k} {y}^{\langle-1/\alpha(f({x_k}_{-}))\rangle}+\sum_{ (x,y) \in \Pi\, :\,  x_{k}< x< x_{k+1}} 1_{(x_k,t]}(x) y^{\langle-1/\alpha(f(x_-))\rangle}\\
&=& a_0 \ +\sum_{ (x,y) \in \Pi\, :\,  t_0< x< x_{k+1}} 1_{(t_0,t]}(x) y^{\langle-1/\alpha(f(x_-))\rangle},
\end{eqnarray*}
using the inductive hypothesis. This extends $f$ to $D[t_0,x_{k+1})$, completing the inductive step. Finally, a similar argument on the interval $[x_n,t_1)$ extends $f$ from $D[t_0,x_{n})$ to $D[t_0,t_1)$ so $f$ satisfies \eqref{ident}.

For uniqueness, note that by Case (a), $f$ is uniquely defined on $[t_0,x_1)$, and since $f \in D[t_0,t_1)$, the value of $\lim_{t\nearrow x_1}f(t)$ and thus of $f(x_1) =\lim_{t\nearrow x_1}f(t) +\sum_{(x,y)\in \Pi\,:\, x=x_1} {y}^{\langle-1/\alpha(f({x_1}_{-}))\rangle}$ is uniquely specified. In the same way, under the inductive assumption that $f$ is uniquely defined on $[t_0,x_k]$, applying part (a) to the interval $[x_k,x_{k+1})$ gives that the extension of $f$ to  $[t_0,x_{k+1}]$  is unique, as is the final extension to $[t_0,t_1)$. 

By applying the result of the theorem to the interval $[s,t) \subset [t_0, t_1)$ taking $a_0 =f(s)$, there is a unique $g$ on $[s,t)$, and thus on $[s,t]$, satisfying 
$$g(t') = f(s) +\sum_{(x,y) \in \Pi} 1_{(s,t']}(x) y^{\langle-1/\alpha(g(x_-))\rangle}  \quad  (t'  \in [s,t]) .$$
From \eqref{ident} 
$$f(t') = f(s) +\sum_{(x,y) \in \Pi} 1_{(s,t']}(x) y^{\langle-1/\alpha(f(x_-))\rangle}   \quad ( t'  \in [s,t] \subset [t_0, t_1)  ),$$
so, by uniqueness,  $g(t') = f(t')$ for $t'  \in [s,t]$, and we conclude that $f(t)$ is determined by $f(s)$ and  $\Pi \cap ((s,t] \times \mathbb{R})$.
\end{proof} 

\subsection{Constructive approach}

It is useful to be able to approximate $f$ satisfying \eqref{identint} by finite sums. A natural approach is to define a sequence of functions $f_n \, (n \in \mathbb{N})$ by restricting the sums to points with $|y| \leq n$. Thus we let
\begin{equation}\label{identfin}
f_n(t) = a_0 +\sum_{ (x,y) \in \Pi \, :\,  |y|\leq n} 1_{(t_0,t]}(x) y^{\langle-1/\alpha(f_n(x_-))\rangle} 
\end{equation}
for  $t_0\leq t <t_1$. Then $f_n \in D[t_0,t_1)$ is uniquely defined as a sum over a finite set of points and is piecewise constant, so we may  evaluate $f_n$  using a finite number of inductive steps. List
$\{x: (x,y) \in \Pi : |y|\leq n\}$ as  $t_0<x_1 < \cdots < x_K <t_1$, and, for convenience, write $x_{K+1} = t_1$. Thus, inductively,
\begin{equation}\label{fnind}
\begin{array}{llll}
f_n(t) &=& a_0& (t\in [t_0, x_1) )\\
f_n(t) &=& {\displaystyle f_n({x_k}_{-}) +  \sum_{(x,y)\in \Pi\,:\, x=x_k, |y|\leq n} {y}^{\langle-1/\alpha(f_n({x_k}_{-}))\rangle} }\quad
& (t\in [x_k, x_{k+1}))
\end{array};
\end{equation}
again for typical $\Pi$  the sum in \eqref{fnind} will normally have a single term.

Note that one of the difficulties with the function given by \eqref{identfin} is that  if, as $n$ increases, a new  point $(x,y)$  enters the sum then, for all  existing $(x',y')$ with $x'>x$ and smaller $|y'|$, the summands ${y'}^{\langle-1/\alpha(f({x'}_{-}))\rangle}$ will change,  leading to a change in $f_n(t)$ for  $t>x$ that is amplified as $t$ increases past larger $x$ with $(x,y)\in \Pi$.

With the indirect definition of $f$ in \eqref{identint} it is not immediately obvious that $\{f_n\}$ converges to $f$.
This is shown in the following theorem which also provides an alternative, constructive, way of obtaining $f$ as the uniform limit of the $f_n$.

\begin{theo}\label{finlim}
Let  $a_0, \alpha$ and $\Pi$ be as above  and  let  $f_n\in D[t_0,t_1)\ (n \in \mathbb{N})$ be given by  \eqref{identfin}. Then $\{f_n\}$ is a Cauchy sequence in $(D[t_0,t_1),\|\cdot\|_\infty)$. Moreover,  $f_n \to f$ in $\|\cdot\|_\infty$ where $f$ is the unique function in $D[t_0,t_1)$ satisfying \eqref{identint}.
\end{theo}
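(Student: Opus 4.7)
The plan is to prove the theorem in three stages: first show that $\{f_n\}$ is Cauchy in $(D[t_0,t_1), \|\cdot\|_\infty)$, then invoke completeness to obtain a limit $f$, and finally verify that $f$ satisfies \eqref{identint}, so by the uniqueness part of Theorem \ref{thmdet} it must be the solution already constructed. Throughout I write $K_n(g)(t) = a_0 + \sum_{(x,y)\in\Pi,\,|y|\leq n} 1_{(t_0,t]}(x) y^{\langle-1/\alpha(g(x_-))\rangle}$ for the truncated operator, so that $f_n = K_n(f_n)$ and, for the $f$ of Theorem \ref{thmdet}, $f = K(f)$.

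For the Cauchy step, let $N$ be as in \eqref{cond} and let $x_1 < \cdots < x_K$ enumerate the $x$-coordinates of the (finitely many) points $(x,y) \in \Pi$ with $|y| < N$, setting $x_0 := t_0$ and $x_{K+1} := t_1$. Let $\epsilon_n := \sum_{(x,y)\in\Pi,\,|y|>n} |y|^{-1/(a,b)}$, which tends to $0$ by \eqref{abconv}, and let $C_\Pi := \sum_{(x,y)\in\Pi} |y|^{-1/(a,b)}$, finite by \eqref{abconv}. For $m > n \geq N$, I bound $\delta_k := \|f_m - f_n\|_{[x_k, x_{k+1})}$ inductively. On each ``clean'' interval $[x_k, x_{k+1})$ only points with $|y| \geq N$ enter, so \eqref{ydif} together with \eqref{cond} produces the contractive estimate $\delta_k \leq 2|f_m(x_k) - f_n(x_k)| + 2\epsilon_n$. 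At the bad point $x_k$ the difference of jumps is bounded via \eqref{ydif} by $(1 + M C_\Pi)|f_m(x_k-) - f_n(x_k-)| + \epsilon_n$, where the extra $\epsilon_n$ absorbs any new jumps at $x_k$ with $|y| \in (n, m]$. Combining gives the recursion $\delta_k \leq 2(1 + M C_\Pi)\delta_{k-1} + 4\epsilon_n$, which solved over the finitely many indices $k = 0, \ldots, K$ yields $\|f_m - f_n\|_\infty \leq C\epsilon_n \to 0$.

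Completeness of $(D[t_0,t_1), \|\cdot\|_\infty)$ then supplies a limit $f \in D[t_0,t_1)$ with $f_n \to f$ uniformly. To identify $f$ with the solution of \eqref{identint}, decompose
\[
K(f)(t) - f_n(t) = \bigl[K(f)(t) - K(f_n)(t)\bigr] + \bigl[K(f_n)(t) - K_n(f_n)(t)\bigr].
\]
The first bracket is bounded in absolute value by $M \|f - f_n\|_\infty \, C_\Pi$ via \eqref{ydif}, while the second equals the tail $\sum_{(x,y)\in\Pi,\,|y|>n} 1_{(t_0,t]}(x) y^{\langle-1/\alpha(f_n(x_-))\rangle}$, of absolute value at most $\epsilon_n$. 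Both tend to $0$, so $K(f)(t) = \lim_n f_n(t) = f(t)$ for every $t$. Hence $f$ satisfies \eqref{identint}, and Theorem \ref{thmdet} identifies it as the unique such function.

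The main obstacle I foresee is the Cauchy step: a single global contraction argument fails because summing $|y|^{-1/(a,b)}$ over the bad points $|y| < N$ can exceed any contraction threshold, so one cannot apply Banach directly to $K_n$ in one shot. The remedy is the same device used in Theorem \ref{thmdet}, namely propagating bounds piecewise across the finitely many bad points $x_k$ and solving the resulting finite recursion. The bookkeeping requires careful attention to two perturbation sources at each $x_k$ (a possible new jump from $|y| \in (n,m]$ and the shift in $\alpha(f(x_k-))$ affecting the existing jump), but because both produce only $O(\epsilon_n)$ or $O(\delta_{k-1})$ contributions and $K$ is finite, the final constant $C$ is finite.
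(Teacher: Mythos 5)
Your argument is correct, but the Cauchy step is organised differently from the paper's. The paper inducts over \emph{all} jump points $x_1<\cdots<x_K$ of $f_n$ (i.e.\ all $x$ with $|y|\leq n$, so $K$ depends on $n$): between consecutive such points $f_n$ is constant and $f_m$ only picks up tail jumps, contributing the $\epsilon_k$ of \eqref{epk}, while at each jump point the discrepancy is amplified by a factor $(1+c_k)$ via \eqref{ydif}; the product $\prod_k(1+c_k)$ is then controlled uniformly in $n$ by $\exp\big(M\sum_{|y|\leq n}|y|^{-1/(a,b)}\big)$, giving the explicit estimate \eqref{cauchy}. You instead partition only at the \emph{fixed} finite set of bad points with $|y|<N$ from \eqref{cond}, run the $\tfrac12$-contraction estimate on each clean interval (which yields the self-referential inequality $\delta_k\leq |f_m(x_k)-f_n(x_k)|+\tfrac12\delta_k+\epsilon_n$, legitimately rearranged since $f_n,f_m$ are finite sums and hence bounded), and close a finite recursion across the $K$ bad points. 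Both give $\|f_m-f_n\|_\infty\leq C\,(\text{tail sum})$ with $C$ independent of $m,n$; the paper's route has the advantage that its constant is the explicit exponential of \eqref{cauchy}, which is reused verbatim in Corollary \ref{estep} and in the expectation bound of Theorem \ref{thmrand3}, whereas your constant (of order $2^{K}(1+MC_\Pi)^{K}$ with $K$ the number of bad points) is realisation-dependent in a way that would be harder to integrate against the Poisson law. Your identification of the limit also differs: you split $K(f)-f_n=[K(f)-K(f_n)]+[K(f_n)-K_n(f_n)]$ and use \eqref{ydif} plus the tail bound, which is a clean alternative to the paper's dominated-convergence argument, and both correctly conclude via the uniqueness in Theorem \ref{thmdet}.
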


\begin{proof}
Let $m>n\geq 2$. Again we list the points
$$\{x: (x,y) \in \Pi : |y|\leq n\} = \{t_0<x_1 < \cdots < x_K <t_1\}.$$
(Note that there may be several points $(x,y) \in \Pi$ with equal values of $x$; if we exclude this exceptional situation then the proof becomes notationally simpler, with the sums in  \eqref{ck} and elsewhere reducing to single terms.)
For notational convenience we set $x_0 := t_0$ and $x_{K+1} := t_1$. Write
\begin{equation}\label{ck}
c_k\  = \ M \!\!\! \sum_{(x,y)\in \Pi\,:\, x=x_k, |y|\leq n} |y|^{-1/(a,b)}  \qquad (1\leq k \leq K)
\end{equation}
and
\begin{equation}\label{epk}
\epsilon_k = \sum_{  (x,y) \in\Pi\, : \, x_{k-1}\leq x<x_k,  |y|> n}  |y|^{-1/b},\qquad (1\leq k \leq K+1)
\end{equation}
We compare $f_n(t)$ and $f_m(t)$ for increasing values of $t$ to
show by induction on $k$ that for $x_0 \leq t <x_k$
\begin{equation}\label{indhyp}
|f_m(t) - f_n(t)| \leq (1+c_1)\cdots(1+c_{k-1})(\epsilon_1 + \cdots +\epsilon_{k-1}) + \epsilon_k.
\end{equation}
Firstly, for $t_0 \leq t <x_1$, 
\begin{eqnarray*}
|f_m(t)-f_n(t)| &=& \Big|a_0 + \sum_{ (x,y) \in \Pi\, : \, t_0<x<x_1, \, n<|y|\leq m}1_{(t_0,t]}(x) y^{\langle-1/\alpha(f_m(x_-))\rangle} - a_0\Big|\\
&\leq &\sum_{ (x,y) \in \Pi\, : \, t_0<x<x_1, \, n<|y|\leq m}  |y|^{-1/b}\  \leq\  \epsilon_1,
\end{eqnarray*}
from \eqref{epk}, noting that $|y| > n \geq 1$ in the sum. 

Now assume inductively that \eqref{indhyp} is true for all $t_0 \leq t <x_k$ for some $k \, (1\leq k\leq K)$. Then for 
$x_k \leq t <x_{k+1}$, taking account of the jumps of $f_n$ and $f_m$ at $x_k$ and the jumps of $f_m$ in the interval $(x_k, t)$, 
\begin{eqnarray*}
f_n(t) &=& f_n(x_{k-}) + \sum_{(x,y)\in \Pi:x=x_k, |y|\leq n} {y}^{\langle-1/\alpha(f_n({x_k}_{-}))\rangle}\\
f_m(t) &=& f_m(x_{k-}) +\sum_{(x,y)\in \Pi: x=x_k, |y|\leq n} {y}^{\langle-1/\alpha(f_m({x_k}_{-}))\rangle}
\ + \sum_{ (x,y) \in \Pi\, : \, x_k \leq x\leq t, \, n<|y|\leq m} y^{\langle-1/\alpha(f_m(x_-))\rangle}. 
\end{eqnarray*}
Hence, using \eqref{ydif}, \eqref{ck} and  \eqref{epk},
\begin{eqnarray*}
\big|f_m(t) - f_n(t)\big| 
&\leq& \big|f_m(x_{k -}) - f_n(x_{k -})\big|\\
&&+\ \Big| \sum_{(x,y)\in \Pi:x=x_k, |y|\leq n} {y}^{\langle-1/\alpha(f_n({x_k}_{-}))\rangle}-  \sum_{(x,y)\in \Pi:x=x_k, |y|\leq n} {y}^{\langle-1/\alpha(f_m({x_k}_{-}))\rangle}\Big| \\
&& +  \sum_{(x,y) \in \Pi\, : \, x_k \leq x\leq t, \, n<|y|\leq m} |y|^{-1/b} \\
&\leq& \big|f_m(x_{k -}) - f_n(x_{k -})\big|(1+ c_k) + \epsilon _{k+1},
\end{eqnarray*}
from which \eqref{indhyp} follows with $k$ replaced by $k+1$ using the inductive hypothesis. By induction  \eqref{indhyp} holds for $t_0 \leq t <t_1$, and in particular, 
\begin{eqnarray}
|f_m(t)-f_n(t)| & \leq &  \prod_{k=1}^K (1+c_k) \sum_{k= 1}^{K+1} \epsilon_k \nonumber\\ 
 &  \leq  & \prod_{(x,y) \in \Pi, |y|\leq n} \big(1+M\, |y|^{-1/(a,b)} \big) \sum_{(x,y) \in \Pi, n<|y|\leq m} |y|^{-1/b} \nonumber \\
&  \leq  &   \exp\Big(M\sum_{(x,y) \in \Pi, |y|\leq n} |y|^{-1/(a,b)}\big)\Big)\sum_{(x,y) \in \Pi, |y| >n} |y|^{-1/b}\,\label{cauchy}
 \end{eqnarray}
using  \eqref{ck}. Since both of the series in \eqref{cauchy} converge,  this can be made arbitrarily small by taking $n$ sufficiently large, so $f_n$ is a Cauchy sequence in $(D[t_0,t_1),\|\cdot\|_\infty)$.

Since $(D[t_0,t_1),\|\cdot\|_\infty)$ is complete, $f_n$ converges to some $f$ in this space. Write \eqref{identfin} as
$$f_n(t) = a_0 +\sum_{ (x,y) \in \Pi} 1_{(t_0,t]}(x) y^{\langle-1/\alpha(f_n(x_-))\rangle} \ 
-\sum_{ (x,y) \in \Pi \, :\,  |y|> n} 1_{(t_0,t]}(x) y^{\langle-1/\alpha(f_n(x_-))\rangle}$$
for each $t\in [t_0,t_1)$. Letting $n\to\infty$ the first sum converges to 
$\sum_{ (x,y) \in \Pi} 1_{(t_0,t]}(x) y^{\langle-1/\alpha(f(x_-))\rangle}$ by the dominated convergence theorem with the summands dominated by $1_{(t_0,t]}(x) |y|^{-1/b}$ over a countable union of atomic measures. The second term is dominated by \\
$\sum_{ (x,y) \in \Pi \, :\,  |y|> n} 1_{(t_0,t]}(x) |y|^{-1/b} \to 0$, so $f$  satisfies \eqref{ident}, and is the unique such function by Theorem \ref{thmdet}.
\end{proof} 

The rate of convergence of $\{f_n\}$ may be estimated in terms of the point set $\Pi$ and $\alpha$. 

\begin{cor}\label{estep}
In the setting of Theorem \ref{finlim}, $f_n \to f\in D[t_0,t_1)$  with 
\begin{eqnarray}
\|f_n-f\|_\infty &  \leq  & \prod_{(x,y) \in \Pi, |y|\leq n} \big(1+M\, |y|^{-1/(a,b)} \big) \sum_{(x,y) \in \Pi, : \,  |y|>n} |y|^{-1/b} \label{errest0}\\
&\leq &\exp\Big(M\sum_{(x,y) \in \Pi, |y|\leq n} |y|^{-1/(a,b)}\Big)\sum_{  (x,y) \in\Pi \, : \,  |y|>n}  |y|^{-1/b},\label{errest}
\end{eqnarray}
 where $M$ is as in \eqref{ydif1a}. These series are convergent, so taking $n$ large makes this norm difference small.
\end{cor}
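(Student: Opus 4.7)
The plan is to observe that the bound (\ref{errest0}) is essentially already established inside the proof of Theorem \ref{finlim}: inequality (\ref{cauchy}) gives exactly this estimate for $\|f_m - f_n\|_\infty$ whenever $m > n$. So the entire argument reduces to passing to the limit $m \to \infty$.

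More concretely, first I would recall that Theorem \ref{finlim} gives $f_m \to f$ in $\|\cdot\|_\infty$, so for each fixed $t \in [t_0,t_1)$ the inequality
\[
|f_m(t) - f_n(t)| \leq \prod_{(x,y) \in \Pi,\, |y|\leq n} \bigl(1 + M|y|^{-1/(a,b)}\bigr) \sum_{(x,y) \in \Pi,\, n < |y| \leq m} |y|^{-1/b}
\]
from (\ref{cauchy}) passes to the limit $m \to \infty$ to yield
\[
|f(t) - f_n(t)| \leq \prod_{(x,y) \in \Pi,\, |y|\leq n} \bigl(1 + M|y|^{-1/(a,b)}\bigr) \sum_{(x,y) \in \Pi,\, |y| > n} |y|^{-1/b},
\]
where the right-hand side does not depend on $t$, and the sum on the right is the monotone limit of the finite sums. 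Taking the supremum over $t$ gives (\ref{errest0}).

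For (\ref{errest}), I would use the elementary inequality $1 + x \leq e^x$ termwise on the product, which converts $\prod(1 + M|y|^{-1/(a,b)})$ into $\exp\bigl(M\sum |y|^{-1/(a,b)}\bigr)$. Finally, convergence of both series was already noted in (\ref{abconv}) and in the hypothesis (\ref{fin}), and the tail sum $\sum_{(x,y) \in \Pi,\, |y|>n} |y|^{-1/b}$ tends to $0$ as $n \to \infty$, so the bound goes to zero.

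There is no real obstacle here; the corollary is essentially a repackaging of the quantitative Cauchy estimate already obtained en route to Theorem \ref{finlim}, with the only subtle point being to justify passage to the limit in $m$, which is immediate from uniform convergence $f_m \to f$.
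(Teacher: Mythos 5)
Your proposal is correct and follows essentially the same route as the paper, whose proof is simply to let $m \to \infty$ in \eqref{cauchy} using the uniform convergence $f_m \to f$ established in Theorem \ref{finlim}; your additional remarks on the monotone limit of the tail sums and the inequality $1+x \leq e^x$ merely spell out steps the paper leaves implicit (the latter already appearing in the derivation of \eqref{cauchy} itself).
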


\begin{proof} 
Letting $m \to \infty$ in \eqref{cauchy},  $f_m(t) \to f(t)$ in $\|\ \|_\infty$  to give these estimates.
\end{proof}

\subsection{Dependence on $\Pi$}
It is natural to ask how the function $f$ satisfying \eqref{identint} varies with $\Pi$. In fact it is far from continuous in any reasonable sense. To illustrate this let $(x,y), (x',y')\in \Pi $ with $x<x'$  and assume  that there are no $(x'',y'')\in \Pi $ with $x<x''<x'$, and also that   $\{y'': (x,y'')\in \Pi\} = y$ and $\{y'': (x',y'')\in \Pi\} = y'$  (these assumptions have little effect on this example). From \eqref{identfin},
\be\label{discont}
f(x') - f(x_-)\  =\  y^{\langle-1/\alpha(f(x_-))\rangle}
+ y'^{\langle-1/(\alpha(f(x_-) +  y^{\langle-1/\alpha(f(x_-))\rangle} ))\rangle}.
\ee
If  $x$ is increased so that $x'<x$ then
$$f(x) - f(x'_-)\  =\  y'^{\langle-1/\alpha(f(x'_-))\rangle}
+ y^{\langle-1/(\alpha(f(x'_-) +  y'^{\langle-1/\alpha(f(x'_-))\rangle} ))\rangle};$$
thus, if $y$ and $y'$ are different, the increment of $f$ due to the combined effect of the two jumps at $x$ and $x'$ can change discontinuously as $x$ increases through $x'$.
[This phenomenon may not be unreasonable for applications: for a financial example, the result of changing pounds to euros just before Britain voted to leave the European Union was somewhat different to changing currency just afterwards!] 
Despite this example, it turns out that this type of discontinuity can only occur at point sets $\Pi$ where 
$x=x'$ for distinct $(x,y), (x',y')\in \Pi$.

For considerations of continuity, the supremum norm on the the c\`{a}dl\`{a}g functions $D[t_0, t_1)$ is inappropriate, since a small change in $\Pi$ may shift a jump point of $f$ slightly but with the resulting function far from $f$ in the norm metric. Moreover, $D[t_0, t_1)$ is not separable under the supremum norm, leading to  topological and measure theoretic difficulties. Thus we now consider the weaker Skorohod metric which regards c\`{a}dl\`{a}g functions as close even if there are small shifts in the jump points. 
The {\it Skorohod metric} $\rho_S$  on $D[t_0,t_1)$ may be defined as
\begin{equation}\label{Skoro}
\rho_S(f,g) = \inf_{w \in \mathcal{W}} \max \{\|w-i\|_\infty, \|f- g\circ w\|_\infty\},
\end{equation}
where $\mathcal{W}$ is the class of all strictly increasing homeomorphisms on $[t_0,t_1)$ and $i$ is the identity, so that $w$ allows for variation in the position of the jump points,   see \cite{Pol} for details.

A natural metric on the point sets in $\mathbb{R}^2$ should regard two sets as close if their points $(x,y)$ with small $|y|$ are `close in pairs'  with little weight being given to points with large $|y|$. Let
$$\mathcal{P}^b = \Big\{\Pi \subset (t_0,t_1)\times \mathbb{R}:  \sum_{(x,y) \in \Pi} |y|^{-1/b}< \infty\Big\}.$$
Then for $\Pi_1,\Pi_2 \in \mathcal{P}^b$ define
\begin{equation}\label{pointmet}
d(\Pi_1,\Pi_2) = \sup_{g\in \mathcal{G}}\Big| \sum_{(x,y) \in \Pi_1} g(x,y)|y|^{-1/b} -  \sum_{(x,y) \in \Pi_2} g(x,y)|y|^{-1/b}\Big|,
\end{equation}
where the supremum is over the class $\mathcal{G}$ of Lipschitzian functions $g:(t_0,t_1)\times \mathbb{R}\to \mathbb{R}$ such that $\|g\|_\infty \leq 1$ and  ${\rm Lip}\, g \leq 1$, where ${\rm Lip}\, g$ denotes the Lipschitz constant of $g$. (Note that an alternative, perhaps more natural, way of expressing $d$ is 
$$d(\Pi_1,\Pi_2) =  \sup_{g\in \mathcal{G}}\Big| \int g d\mu_{\Pi_1} -  \int g d\mu_{\Pi_2}\Big|,$$
 where 
$\mu_\Pi$ is the measure on  $\Pi\in \mathcal{P}^b$ given by 
$\mu_\Pi = \sum_{(x,y) \in \Pi} |y|^{-1/b} \delta_{(x,y)}$ with  $ \delta_{(x,y)}$  the unit point mass at $(x,y)$.) It is easy to see that $d$ is defined and is a metric on $\mathcal{P}^b$.

Theorems \ref{thmdet} and \ref{finlim} and \eqref{ident} show that if $b<b'<1$ then there are well-defined maps 
\be\label{psi}
\begin{array}{lrl}
&\psi_n: \mathcal{P}^{b'} \to D[t_0,t_1), &\mbox{ where $\psi_n(\Pi)$ is the unique $f_n$ satisfying \eqref{identfin}}\\
&\psi: \mathcal{P}^{b'} \to D[t_0,t_1), &\mbox{ where $\psi(\Pi)$ is the unique $f$ satisfying \eqref{ident}}
\end{array}.
\ee
 We show that $\psi_n$ and $\psi$ are continuous with respect to the metrics at point sets $\Pi$ apart from at certain exceptional $\Pi$. We let $L_{\pm n}$ denote the pair of lines $y=\pm n$.

\begin{prop}\label{ctyprop}
Let $a_0$ and $\alpha:\mathbb{R} \to [a,b]\subset(0,1)$ be as above and let $b<b'<1$. Then:

(i) for each $n\in \mathbb{N}$, $\psi_n: (\mathcal{P}^{b'},d) \to (D[t_0,t_1), \rho_S)$
 is continuous at all $\Pi_0 \in \mathcal{P}^{b'}$ such that $\Pi_0\cap L_{\pm n} = \emptyset$ and $x\neq x'$ for all distinct $(x,y),(x',y')\in \Pi_0$ with $|x|,|x'| < n$;
 
(ii) $\psi: (\mathcal{P}^b,d) \to (D[t_0,t_1), \rho_S)$
 is continuous at all $\Pi_0 \in \mathcal{P}^{b'}$ such that  $x\neq x'$ for all distinct $(x,y),(x',y')\in \Pi_0$.
\end{prop}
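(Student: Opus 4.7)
The plan is to settle part (i) by unpacking the finite configuration underlying $\psi_n(\Pi_0)$ and using dual-Lipschitz test functions in $\mathcal{G}$ to track individual atoms of $\Pi_k$, and then to derive part (ii) from (i) together with the uniform tail estimate of Corollary~\ref{estep}.

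For part (i), fix $\Pi_0$ satisfying the hypotheses and enumerate $\Pi_0 \cap ((t_0,t_1) \times [-n,n]) = \{(x_1,y_1),\ldots,(x_K,y_K)\}$, which is finite since $\Pi_0 \in \mathcal{P}^{b'}$. By hypothesis the $x_j$ are distinct and $|y_j| < n$ strictly, so one can choose $\delta \in (0,1]$ so small that the balls $B_j := B((x_j,y_j),\delta)$ are pairwise disjoint, lie in $(t_0,t_1) \times (-n,n)$, and have disjoint $x$-projections. Assuming $d(\Pi_k,\Pi_0) \to 0$, the bump $g_j(x,y) = \max\{0,\delta - |(x,y)-(x_j,y_j)|\}$ lies in $\mathcal{G}$, so \eqref{pointmet} produces, for each $j$ and all large $k$, a unique point $(x_j^k,y_j^k) \in \Pi_k \cap B_j$ with $(x_j^k,y_j^k) \to (x_j,y_j)$; a companion test function supported on $\{|y|\leq n\} \setminus \bigcup_j B_j$ rules out further atoms of $\Pi_k$ in the strip $\{|y|\leq n\}$, where the buffer $\Pi_0 \cap L_{\pm n} = \emptyset$ is essential to preclude spurious atoms near $L_{\pm n}$. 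Define the piecewise linear homeomorphism $w_k \in \mathcal{W}$ fixing $t_0$ and sending $x_j$ to $x_j^k$ for each $j$; then $\|w_k-i\|_\infty \to 0$. Writing $f_n^0 = \psi_n(\Pi_0)$ and $f_n^k = \psi_n(\Pi_k)$, both are piecewise constant with jumps only at the $x_j$, respectively the $x_j^k$, so $f_n^k \circ w_k$ and $f_n^0$ share jump locations; an induction on $j$ via \eqref{fnind} combined with the mean-value bound \eqref{ydif} (applied both to the shift $|y_j^k - y_j|$ and to the inductively controlled difference of left limits) then gives $\|f_n^k \circ w_k - f_n^0\|_\infty \to 0$, and hence $\rho_S(f_n^k,f_n^0) \to 0$.

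For part (ii), the distinct $x$-coordinates hypothesis implies that the conditions of (i) fail for at most countably many $n \in \mathbb{N}$, so one may pick $n$ arbitrarily large at which $\psi_n$ is continuous at $\Pi_0$. Given $\varepsilon > 0$, apply the triangle inequality
\begin{equation*}
\rho_S(\psi(\Pi_k),\psi(\Pi_0)) \leq \|\psi_n(\Pi_k)-\psi(\Pi_k)\|_\infty + \rho_S(\psi_n(\Pi_k),\psi_n(\Pi_0)) + \|\psi_n(\Pi_0)-\psi(\Pi_0)\|_\infty.
\end{equation*}
Corollary~\ref{estep} bounds the third term by tail sums that vanish as $n \to \infty$, and the analogous estimate for the first term involves the $\Pi_k$-tails $\sum_{\Pi_k,|y|>n}|y|^{-1/b}$ and $\sum_{\Pi_k,|y|\leq n}|y|^{-1/(a,b)}$; these are compared with the corresponding $\Pi_0$-sums via ramp-type test functions of $|y|$ belonging to $\mathcal{G}$, so both outer terms are $<\varepsilon/3$ once $n$ is fixed large and $k$ is large, and part (i) makes the middle term $<\varepsilon/3$ for $k$ large.

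The main obstacle is the atom-tracking step of part (i): converting dual-Lipschitz convergence over $g \in \mathcal{G}$ into genuine convergence of individual atom locations of $\Pi_k$ together with the absence of spurious nearby atoms. The two exclusion hypotheses---no atom of $\Pi_0$ on $L_{\pm n}$ and distinctness of $x$-coordinates---are precisely what prevents the collision-of-jumps discontinuity illustrated by \eqref{discont}; once this discrete correspondence is in hand, the propagation of small errors through \eqref{fnind} via \eqref{ydif} is routine inductive bookkeeping.
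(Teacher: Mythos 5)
Your overall route is the same as the paper's: for (i) you perturb the finitely many atoms with $|y|\leq n$, absorb the small shifts of jump locations into a time change for the Skorohod metric, and propagate the errors through the finite recursion \eqref{fnind} via \eqref{ydif}; for (ii) you use the triangle inequality with a version of Corollary~\ref{estep} made uniform on a $d$-neighbourhood of $\Pi_0$, exactly as in the paper (your atom-tracking via bump functions is in fact spelled out in more detail than the paper's one-line assertion that small $d$ forces the points to be close in pairs).

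There is, however, one genuine gap in your part (ii): the claim that the hypotheses of (i) ``fail for at most countably many $n\in\mathbb{N}$'' does not give you arbitrarily large admissible integers $n$, since $\mathbb{N}$ is itself countable. Indeed there are legitimate $\Pi_0\in\mathcal{P}^{b'}$ satisfying the hypotheses of (ii) that meet $L_{\pm n}$ for \emph{every} $n\in\mathbb{N}$ --- for instance a set containing a point $(x_n,n)$ for each $n$, which is allowed because $\sum_n n^{-1/b'}<\infty$ --- and your argument as written says nothing about such $\Pi_0$. The paper closes this case by truncating at a non-integer level $\widetilde{n}$ close to $n$ with $L_{\pm\widetilde n}\cap\Pi_0=\emptyset$ (possible because only countably many horizontal levels are occupied), and running the identical argument with $f_{\widetilde n}$ in place of $f_n$; you need this extra step, or some equivalent, for (ii) to cover all $\Pi_0$ in the statement.

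A second, smaller, imprecision: the bulk term $\sum_{\Pi_k,\,|y|\leq n}|y|^{-1/(a,b)}$ in \eqref{errest} cannot be compared with the corresponding $\Pi_0$-sum by ``ramp-type test functions of $|y|$ belonging to $\mathcal{G}$'', because $d$ in \eqref{pointmet} weights atoms by $|y|^{-1/b}$ and members of $\mathcal{G}$ are bounded by $1$ with Lipschitz constant $1$, whereas the ratio $|y|^{-1/(a,b)}/|y|^{-1/b}$ is unbounded as $|y|\to 0$ and grows logarithmically as $|y|\to\infty$. The fix is routine but should be said: first note that for $d(\Pi,\Pi_0)$ small every atom of $\Pi$ has $|y|$ bounded below by half the minimum over $\Pi_0$ (a bump at a lower atom would force $d$ away from $0$), then use a rescaled test function $h/\max\{\|h\|_\infty,{\rm Lip}\,h\}$ with $h$ a ramped version of $|y|^{1/b-}$ times the desired weight on the truncated strip, accepting constants depending on $n$ and $\Pi_0$; only boundedness of this factor, not closeness, is needed. (Similarly, a single bump $g_j$ does not by itself give a \emph{unique} atom of $\Pi_k$ in $B_j$ --- two atoms near the rim can mimic one at the centre --- so the exclusion step needs bumps of several radii or a short weak-convergence argument; but this is detail at a level the paper itself does not spell out.)
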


\begin{proof} 
For a given $n\in \mathbb{N}$ let  $\Pi_0$ satisfy the conditions of (i);  we show that $\psi_n$ is continuous at $\Pi_0$.
Order  $\{x: (x,y) \in \Pi_0 : |y|\leq n\}$ as  $t_0<x_1 < \cdots < x_K <t_1$ and let $x_{K+1} = t_1$; by the assumption on $\Pi_0$ the $x_i$ are all distinct. Writing $f_n = \psi_n(\Pi_0)$, the inductive definition  in \eqref{fnind} simplifies to 
\begin{equation}\label{fnind2}
\begin{array}{llll}
f_n(t) &=& a_0& (t\in [t_0, x_1) )\\
f_n(t) &=&  f_n(x_{k-1}) +   {y_k}^{\langle-1/\alpha(f({x_{k-1}}))\rangle} \quad
& (t\in [x_k, x_{k+1}))
\end{array}.
\end{equation}
Given $\epsilon>0$, if $\Pi'$ is formed by the points $(x'_1,y'_1),\ldots, (x'_K,y'_K)$ where $\max\{|x_i - x'_i| ,|y_i-y'_i|\}$ is sufficiently small for all $i$, not least so that $t_0<x'_1 < \cdots < x'_K <t_1$ and $|y'_i|<n$, then replacing $(x_i,y_i)$ by $(x'_i,y'_i)$ in \eqref{fnind2} gives a function $f'_n= \psi_n(\Pi')$ such that $\rho_S(\psi_n(\Pi_0),\psi_n(\Pi')) = \rho_S(f_n,f'_n)<\epsilon$, since we are using the Skorohod metric and the jump points move only slightly. This situation pertains if $d(\Pi_0,\Pi')$ is sufficiently small, so $\psi_n$ is continuous at $\Pi_0$.

Now let $\Pi_0 \in \mathcal{P}^{b'}$ satisfy the conditions of (ii) and let $\epsilon>0$. 
Assume first that there are arbitrarily large $n$ such that the pair of lines $L_{\pm n}$ has empty intersection with $\Pi_0$. Then for such $n$, in \eqref{errest} the left-hand sum is bounded independently of $n$ in a neighbourhood of $\Pi_0$, and by taking $n$ large enough the right-hand sum may be made arbitrarily small uniformly in a neighbourhood of $\Pi_0$, so we may find arbitrarily large $n \in \mathbb{N}$ and $\delta_1 >0$ such that if $d(\Pi_0,\Pi)<\delta_1$ then    
\be\label{rhos}
\rho_S\big(\psi_n(\Pi),\psi(\Pi)\big)\ \leq\ \|\psi_n(\Pi)-\psi(\Pi)\|_\infty\ \leq\ {\textstyle \frac{1}{3}}\epsilon.
\ee
By part (i), there is $\delta_2 >0$ such that if $d(\Pi_0,\Pi)<\delta_2$ and $n$ is sufficiently large then 
$$\rho_S\big(\psi_n(\Pi_0),\psi_n(\Pi)\big)  \leq {\textstyle \frac{1}{3}}\epsilon.$$
Combining with \eqref{rhos},  $\rho_S\big(\psi(\Pi_0),\psi(\Pi)\big)\leq \epsilon$ if $d(\Pi_0,\Pi) < \min\{\delta_1,\delta_2\}$, so $\psi$ is continuous at $\Pi_0$.

In the exceptional case where $y=\pm n$ has non-empty intersection with $\Pi_0$ for all sufficiently large $n$, the same argument holds on  replacing $f_n$ by $f_{\widetilde{n}}$, where $\widetilde{n}$ is a real number close to $n$ such that $y=\pm\widetilde{n}$ does not intersect $\Pi_0$.
 \end{proof}

\subsection{Some variants}

{\it Weighted case} \ \ We remark that very similar arguments to those in Theorems \ref{thmdet} and \ref{finlim}, but with more awkward derivative expressions, give that if $w: [a,b] \to \mathbb{R}$ is a continuously differentiable `weight' function, then there exists a unique $f \in D[t_0,t_1)$ such that 
\begin{equation}\label{identwt}
f(t) = a_0 +\sum_{(x,y) \in \Pi} 1_{(t_0,t]}(x) w\big(\alpha(f(x_-))\big)  y^{\langle-1/\alpha(f(x_-))\rangle} 
\end{equation}
for  $t_0\leq t <t_1$.
\medskip
 
\noindent{\it Non-autonomous case}\ \ 
In applications, especially in finance, we would not expect that the height of
the jump at location $x$ to depend only on the value of the function just
before the jump. In other words, the exponent of $y$, in, for example, \eqref{identwt} 
would also depend on other factors. For instance, if the price of, say, 
Pendragon (which is part of the FTSE 250) jumps at time $t$, it is likely that the size of the jump will be determined by the value of this asset just before the
jump, but also by the time  $t$ and probably the value of the composite index FTSE250 at this time. It is thus useful to allow the $\alpha$ function to depend 
not only on $f(x_-)$, but also on $t$ and an auxiliary function $g$. 
The results above go through in this slightly more general case with
minimal modification. More precisely, let $g: [t_0,t_1) \to \mathbb{R}$ be 
a measurable function and $\alpha:[t_0,t_1) \times \mathbb{R}^2 \to (a,b)$ be
continuously differentiable with bounded derivative with respect to its second 
variable. We now define $f_n$ by
\begin{equation}\label{identfinnonaut}
f_n(t) = a_0 +\sum_{ (x,y) \in \Pi \, :\,  |y|\leq n} 1_{(t_0,t]}(x) y^{\langle-1/\alpha(t,f_n(x_-),g(t))\rangle} 
\end{equation}
for  $t_0\leq t <t_1$.
 
\begin{theo}\label{finlimnonaut}
Let  $a_0, g,\alpha$ and $\Pi$ be as above. Let  $f_n\in D[t_0,t_1)\ (n \in \mathbb{N})$ be
given by  \eqref{identfinnonaut}. Then $\{f_n\}$ is a Cauchy sequence 
in $(D[t_0,t_1),\|\cdot\|_\infty)$, the limit $f$ of which satisfies
\begin{equation}\label{identintnonaut}
f(t) = a_0 +\sum_{(x,y) \in \Pi} 1_{(t_0,t]}(x) y^{\langle-1/\alpha(t,f(x_-),g(t))\rangle}. 
\end{equation}
\end{theo}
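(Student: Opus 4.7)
The plan is to follow the same strategy as in Theorems \ref{thmdet} and \ref{finlim}, using the fact that continuity of $\alpha$ and boundedness of its partial derivative in the second variable are precisely what drives the Cauchy argument. The only real novelty compared with \eqref{identfin} is that $f_n$ is no longer piecewise constant between the jump locations, because the summands in \eqref{identfinnonaut} depend explicitly on the evaluation time $t$ and on $g(t)$.

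First I would establish the analogue of \eqref{ydif}. Setting
\[ \widetilde{M} := \sup_{s,v,w} \frac{|\partial_v \alpha(s,v,w)|}{\alpha(s,v,w)^2},\]
which is finite since $\alpha$ takes values in the compact $[a,b]\subset (0,1)$ and $\partial_v\alpha$ is bounded, the mean value theorem applied to $v \mapsto y^{\langle -1/\alpha(s,v,w)\rangle}$ yields
\[ \big| y^{\langle -1/\alpha(s,v,w)\rangle} - y^{\langle -1/\alpha(s,u,w)\rangle}\big| \le \widetilde{M}\,|v-u|\,|y|^{-1/(a,b)},\]
with $|y|^{-1/(a,b)}$ as in \eqref{ydif1}. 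When comparing $f_m(t)$ and $f_n(t)$ at a common $t$, the first and third arguments of $\alpha$ agree between the two functions, so this is the only Lipschitz property needed.

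Next, $f_n$ is constructed inductively over the intervals $[x_k, x_{k+1})$ determined by the finite set $\{x : (x,y)\in\Pi,\,|y|\le n\} = \{t_0<x_1\le\cdots\le x_K<t_1\}$, exactly as in \eqref{fnind}, except that on each $[x_k,x_{k+1})$ the function $t\mapsto f_n(t)$ now varies continuously with $t$ through the factors $\alpha(t,\cdot,g(t))$; it is nonetheless c\`{a}dl\`{a}g. Comparing $f_m$ and $f_n$ at a common $t\in[x_k,x_{k+1})$ and using the estimate above termwise gives
\[ |f_m(t) - f_n(t)| \le \widetilde{M}\!\!\! \sum_{j:\,x_j\le t}\!\! |f_m(x_{j-}) - f_n(x_{j-})|\, |y_j|^{-1/(a,b)} + \!\!\!\!\! \sum_{\substack{(x,y)\in\Pi:\,x\le t,\\ n<|y|\le m}}\!\!\!\! |y|^{-1/b}.\]
A discrete Gronwall argument on the index $k$ then produces the same estimate as \eqref{cauchy}, namely
\[ \|f_m - f_n\|_\infty \le \exp\!\Big(\widetilde{M}\!\!\!\!\! \sum_{(x,y) \in \Pi,\,|y| \le n}\!\!\!\!\! |y|^{-1/(a,b)}\Big) \!\!\!\!\! \sum_{(x,y) \in \Pi,\,|y| > n}\!\!\!\! |y|^{-1/b}.\]
Both series converge by \eqref{abconv} and \eqref{fin}, so the right-hand side tends to $0$ as $n\to\infty$, and $\{f_n\}$ is a Cauchy sequence in the complete space $(D[t_0,t_1),\|\cdot\|_\infty)$, converging uniformly to some $f\in D[t_0,t_1)$.

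Finally, to obtain \eqref{identintnonaut} from \eqref{identfinnonaut}, I would write the latter as the full Poisson sum over $\Pi$ minus the tail over $|y|>n$, and pass to the limit $n\to\infty$ using the dominated convergence theorem with the summable dominant $1_{(t_0,t]}(x)|y|^{-1/b}$ on the atomic measure carried by $\Pi$, together with uniform convergence $f_n\to f$ and continuity of $\alpha$ in its second variable. The main technical point, compared with Theorem \ref{finlim}, is precisely the Gronwall step: because each past jump continues to influence later values of $f_n$ through its $t$-dependent exponent, the recursion now couples $|f_m(x_{k-})-f_n(x_{k-})|$ to all earlier differences rather than just to its immediate predecessor; but the multiplicative telescoping into an exponential bound survives and yields the same final estimate as in the autonomous case.
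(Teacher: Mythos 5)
Your proposal is correct and follows exactly the route the paper intends: the paper's own proof of Theorem \ref{finlimnonaut} consists of the remark that only trivial modifications of Theorem \ref{finlim} are needed, and your argument is precisely that adaptation --- the mean value theorem applied in the second variable of $\alpha$ (the first and third arguments agreeing when comparing $f_m$ and $f_n$ at the same $t$), with the one genuine wrinkle correctly identified, namely that re-evaluating every earlier jump's exponent at each later $t$ couples the recursion to all previous differences, so the one-step telescoping of \eqref{indhyp} becomes a discrete Gronwall bound yielding the same exponential estimate as \eqref{cauchy}. The only small point to watch, shared with the paper's own proof of Theorem \ref{finlim}, is that the dominating summand in the final limiting step should be $\max\{|y|^{-1/a},|y|^{-1/b}\}$ (summable by \eqref{abconv}) rather than $|y|^{-1/b}$ alone, since $|y|$ may be less than $1$.
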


\begin{proof}
The proof only requires trivial modifications to that of Theorem \ref{finlim}
and is omitted.
\end{proof}
There are many other ways of defining functions as sums over point sets which yield such functional identities. Another possibility would be to replace powers of $y$ in the sums by more general functions of  the form $\phi(y,f(x_-))$, subject to reasonable decay of $\phi$ and $\partial\phi(y,u)/\partial u$  as $|y| \to \infty$.

\section{Sums over random sets and self-stabilizing processes}\label{sec:stoch}
\setcounter{equation}{0}
In this section we take the point set $\Pi$ in Section \ref{sec:deter} to be a random set given by a Poisson point process in the plane. This leads to a random function on $D[t_0,t_1)$ which we show is right-localizable with the desired self-stabilizing property.

\subsection{Sums over random sets}\label{sec3.1}
The underlying probability space for our processes will be that of a Poisson point process $\Pi$ on the plane, which we can take to be defined by the requirement that $N(A)$, the number of points of $\Pi$ in every Borel set $A\subset (t_0 ,t_1)\times \mathbb{R}$ is a random variable. The Poisson point process is defined by a {\em mean measure} $\mu$, so that $N(A)$ has Poisson distribution with mean $\mu(A)$, independently for disjoint Borel sets $A$.
Here we will take the mean measure to be plane Lebesgue measure ${\mathcal L}^2$ restricted to   $(t_0 ,t_1)\times \mathbb{R}$. The sums of functions of the points in $\Pi$ that we consider are random variables and thus the Poisson point process defines a probability distribution on $D[t_0 ,t_1)$ in a natural way.
See \cite{King} for these and other details of Poisson processes.

The probability measure is transferred to $D[t_0 ,t_1)$ by the mapping $\psi$ of  \eqref{psi}.  Using the continuity properties of Proposition \ref{ctyprop} it can be shown that  the Borel sets of $D[t_0 ,t_1)$ are  measurable.

We derive a random version of Theorem \ref{finlim}. As in Section 2, we take $\alpha:\mathbb{R} \to [a,b]$ to be a continuously differentiable function with bounded derivative  with  $0<a <b<1 $ and take $a_0\in \mathbb{R}$.

\begin{theo}\label{thmrand}
Let $\Pi \subset (t_0 ,t_1)\times \mathbb{R}$ 
be a  Poisson point process with ${\mathcal L}^2$ as mean measure. 
Then there exists a Markov process  $Z  $ on $[t_0 ,t_1)$  such that, almost surely, the sample paths are in $D[t_0 ,t_1)$ with  $Z  (t_0 )=a_0$ and 
\begin{equation}\label{identrand}
Z  (t) = a_0 +\sum_{(\X,\Y) \in \Pi} 1_{(t_0 ,t]}(\X)\, \Y^{\langle-1/\alpha(Z  (\X_-))\rangle} \qquad (t_0 \leq t <t_1).
\end{equation}
Writing 
\begin{equation}\label{zedn}
Z_n(t) = a_0 +\sum_{(\X,\Y) \in \Pi:|\Y|\leq n} 1_{(t_0 ,t]}(\X)\, \Y^{\langle-1/\alpha(Z_n  (\X_-))\rangle} \qquad (t_0 \leq t <t_1)
\end{equation}
then almost surely, $\|Z_n-Z\|_\infty \to 0$ as $n\to \infty$.
\end{theo}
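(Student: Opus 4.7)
The strategy is to apply the deterministic Theorems \ref{thmdet} and \ref{finlim} pathwise on the probability space of the Poisson point process. The first task is to verify that $\Pi$ almost surely lies in the admissible class, i.e.\ satisfies \eqref{fin} for some $b'\in(b,1)$. Fix any such $b'$. The region $(t_0,t_1)\times\{|y|\leq 1\}$ has finite plane Lebesgue measure, so almost surely $\Pi$ contains only finitely many points there, contributing a finite amount to $\sum|\Y|^{-1/b'}$. For the complement, Campbell's formula gives
\[
\mathbb{E}\Big[\sum_{(\X,\Y)\in\Pi,\,|\Y|>1}|\Y|^{-1/b'}\Big]
= (t_1-t_0)\int_{|y|>1}|y|^{-1/b'}\,dy <\infty,
\]
since $1/b'>1$. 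Hence the sum in \eqref{fin} is finite almost surely, and on this full-probability event the deterministic construction applies pathwise.

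On this event, set $Z(\omega,t):=\psi\bigl(\Pi(\omega)\bigr)(t)$ and $Z_n(\omega,t):=\psi_n\bigl(\Pi(\omega)\bigr)(t)$ in the notation of \eqref{psi}. Theorem \ref{thmdet} then yields \eqref{identrand} with sample paths in $D[t_0,t_1)$ and $Z(t_0)=a_0$, while Theorem \ref{finlim} gives \eqref{zedn} together with the almost sure uniform convergence $\|Z_n-Z\|_\infty\to 0$. For measurability of $Z$ and $Z_n$ as random elements of $(D[t_0,t_1),\rho_S)$, I would appeal to Proposition \ref{ctyprop}: the maps $\psi_n$ and $\psi$ are continuous away from the exceptional configurations described there, and those configurations (a point of $\Pi$ landing exactly on $L_{\pm n}$, or two distinct points of $\Pi$ sharing an $x$-coordinate) have probability zero under the Poisson law, since the relevant events have mean measure zero. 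Hence $\psi_n$ and $\psi$ are almost everywhere continuous, whence Borel measurable into the Polish Skorohod space.

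It remains to establish the Markov property. The key input is the last assertion of Theorem \ref{thmdet}: for $t_0\leq s<t<t_1$, $Z(t)$ is completely determined by $Z(s)$ and the restriction $\Pi\cap((s,t]\times\mathbb{R})$. Because $\mathcal{F}_s:=\sigma(Z(r):r\leq s)$ is contained in $\sigma\bigl(\Pi\cap((t_0,s]\times\mathbb{R})\bigr)$, and the latter is independent of $\Pi\cap((s,t_1)\times\mathbb{R})$ by the defining property of the Poisson process, the conditional distribution of $(Z(t))_{t\geq s}$ given $\mathcal{F}_s$ agrees with its conditional distribution given $Z(s)$, which is the Markov property. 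The principal obstacle is the measurability step --- specifically, confirming that almost-sure continuity of $\psi,\psi_n$ off a null exceptional set really does yield Borel measurability into $(D[t_0,t_1),\rho_S)$, for which the separability of the Skorohod space is essential. Once this is granted, the rest of the argument is a transparent pathwise translation of the deterministic results of Section \ref{sec:deter}.
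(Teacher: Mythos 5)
Your proposal is correct and follows essentially the same route as the paper: almost sure verification of the summability condition \eqref{fin} so that Theorems \ref{thmdet} and \ref{finlim} apply pathwise (giving \eqref{identrand}, \eqref{zedn} and $\|Z_n-Z\|_\infty\to 0$), with the Markov property deduced exactly as in the paper from the final assertion of Theorem \ref{thmdet} together with the independence of $\Pi$ on disjoint strips. Your added details (the Campbell-formula estimate and the measurability discussion via Proposition \ref{ctyprop}) merely flesh out points the paper treats as standard or handles in the remarks preceding the theorem.
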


\begin{proof}
By standard properties of Poisson point processes,   $\Pi$ is almost surely a countable set of isolated points such that 
\be\label{finrand}
 \sum_{(\X,\Y) \in \Pi} 1_{(t_0 ,t_1]}(\X)|\Y|^{-1/b'}< \infty
\ee
for every $0<b'<1$. For each such realisation of  $\Pi$,  Theorem \ref{thmdet} gives  a unique $Z \in D[t_0 ,t_1)$ satisfying \eqref{identrand} and Theorem \ref{finlim} gives that $\|Z_n-Z\|_\infty \to 0$. 

Let  $t_0  \leq s <t < t_1$ and let $\mathcal{F}_t$ denote the $\sigma$-field underlying the restricted point process $\Pi \cap \big((t_0 ,t]\times \mathbb{R}\big)$, so that  $(\mathcal{F}_t, t_0  \leq t < t_1 )$ is a filtration with respect to the usual ordering and $Z(t)$ is adapted to this filtration.
By the final part of Theorem 2.2,
$$Z  (t) = Z  (s) + \sum_{(\X,\Y) \in \Pi} 1_{(s,t]}(\X)\, \Y^{\langle-1/\alpha(Z  (\X_-))\rangle} $$
with the right-hand sum independent of $\mathcal{F}_s$, so 
for  $A \subset  \mathbb{R}$ a Lebesgue measurable set, $\mathbb{P} \big(Z  (t) \in A\,|\, \mathcal{F}_s\big) = \mathbb{P} \big(Z  (t) \in A\, |\, Z  (s)\big)$, thus $Z  $ is a Markov process. 
\end{proof}

We note here that it is possible to define a related random process satisfying
$$\widetilde{Z}  (t) = \sum_{(\X,\Y) \in \Pi} 1_{(t_0 ,t]}(\X)\, 
\Y^{\langle-1/\alpha(\widetilde{Z}  (t_-))\rangle} \qquad (t_0 \leq t <t_1)$$
instead of \eqref{identrand}. Thus $\widetilde{Z}$ is the self-stabilizing version
of $\widetilde{M}_\alpha$ in \eqref{mtilde}. However, $\widetilde{Z}$ is not a Markov process and is not
even causal, {\it i.e.} it cannot be constructed progressively in time. It
is therefore not adapted to the modelling of time series. It could however 
prove a useful model for other data, such as natural terrains.

It is useful to estimate the speed of convergence of  $Z_n$ to $Z$ in Theorem \ref{thmrand}, for example for purposes of simulating these random functions. However, getting reasonable estimates for the rates of convergence in Theorem \ref{thmrand} is awkward since  the probability of $(\X,\Y) \in \Pi$ with $|\Y|$ very small is high enough to make expectation estimates diverge. Nevertheless, we can obtain some concrete convergence estimates if we modify the setting slightly by assuming $|\Y|\geq K$ for some $K>0$; in practice this is a realistic assumption in that it essentially just excludes the possibility of $Z$ having unboundedly large jumps.

We will need Campbell's theorem on expectations of sums over Poisson point sets.

\begin{theo}[Campbell's Theorem] \label{camp}
Let $\Pi$ be a Poisson process on $S\subset \mathbb{R}^n$ with mean measure $\mu$ and let $f:S \to \mathbb{R}$ be measurable. Then
$$\E\bigg(\sum_{\Pt \in \Pi}f(\Pt)\bigg) = \int_S f(u)d\mu(u)$$
provided this integral converges, and 
$$\E\bigg(\exp\sum_{\Pt \in \Pi}f(\Pt)\bigg) = \exp\int_S \big( \exp f(u)-1\big)d\mu(u)$$
provided $\int_S \min\{|f(u)|,1\} d\mu(u)  <\infty$.
\end{theo}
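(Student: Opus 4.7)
The approach is the standard measure-theoretic ladder: establish the identities for indicator functions, extend by linearity and independence to non-negative simple functions, and pass to general measurable $f$ via monotone and dominated convergence. Both formulas rest on the single fact that $N(A) = \sum_{\Pt \in \Pi} 1_A(\Pt)$ is Poisson-distributed with mean $\mu(A)$ when $\mu(A) < \infty$, together with the independence of $N(A_1), \ldots, N(A_m)$ for disjoint $A_i$.

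For $f = 1_A$ with $\mu(A) < \infty$, the first identity reduces to $\E[N(A)] = \mu(A) = \int_S 1_A\, d\mu$, while the second is the direct computation
\begin{equation*}
\E\bigl[e^{c N(A)}\bigr] \;=\; \sum_{k=0}^{\infty} e^{ck} e^{-\mu(A)} \frac{\mu(A)^{k}}{k!} \;=\; \exp\bigl(\mu(A)(e^{c}-1)\bigr) \;=\; \exp\!\int_{S}(e^{c 1_{A}}-1)\, d\mu.
\end{equation*}
For a simple $f = \sum_{i=1}^{m} c_i 1_{A_i}$ with disjoint $A_i$ of finite measure, the first identity follows from linearity of expectation, and the second from independence: the exponential of the sum factors as a product, giving $\prod_i \exp(\mu(A_i)(e^{c_i}-1)) = \exp\int_S (e^{f}-1)\, d\mu$.

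To extend the first identity to non-negative measurable $f$, I would take simple $f_n \uparrow f$ and apply monotone convergence to both sides, and then handle signed $f$ by writing $f = f^+ - f^-$, the hypothesis of convergence of $\int f\, d\mu$ ensuring that both $\int f^\pm\, d\mu$ are finite. For the second identity with a general $f$, the main obstacle is that $e^f - 1$ is not signed, so I would partition $S$ into $S^+ = \{f \geq 0\}$ and $S^- = \{f < 0\}$ and exploit the independence of the Poisson restrictions $\Pi \cap S^{\pm}$ to factor the expectation. On $S^+$ the non-negative case applies: simple $f_n \uparrow f$ gives the identity in $\mathbb{R} \cup \{+\infty\}$ by monotone convergence. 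On $S^-$, the summands $e^{f(\Pt)}$ lie in $(0,1]$ and $e^f - 1 \in (-1, 0)$; the hypothesis $\int \min\{|f|, 1\}\, d\mu < \infty$, together with $1 - e^{-|f|} \leq \min\{|f|, 1\}$, forces $\int_{S^-} |e^f - 1|\, d\mu < \infty$ and permits dominated convergence along truncations $f \vee (-n) \uparrow f$. Combining the two sides of $S$ then yields the identity in general.

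The main technical obstacle is this last bookkeeping, specifically ensuring both sides are simultaneously well-defined and finite under the single hypothesis $\int_S \min\{|f|, 1\}\, d\mu < \infty$; the essential content of the theorem is already captured by the one-line computation for a single indicator.
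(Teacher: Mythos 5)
Your proposal is correct and follows essentially the same route as the source the paper relies on: the paper gives no proof of its own, citing Kingman's \emph{Poisson Processes}, Section 3.2, where the argument is exactly this ladder of indicators, simple functions over disjoint finite-measure sets (using independence of the counts), and monotone/dominated limits, with the hypothesis $\int_S \min\{|f|,1\}\,d\mu<\infty$ doing precisely the bookkeeping you describe on $\{f\ge 0\}$ and $\{f<0\}$. One cosmetic slip: the truncations $f\vee(-n)$ decrease to $f$ rather than increase, but your dominated-convergence step (with $|e^{f\vee(-n)}-1|\le |e^{f}-1|$, integrable on $\{f<0\}$) is unaffected.
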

\begin{proof}
See \cite[Section 3.2]{King}.
\end{proof}

\begin{theo}\label{thmrand3}
Let  $K>0$ and  let $\Pi $ be a Poisson process on $ (t_0,t_1)\times (-\infty, -K] \cup [K,\infty)$  
 with  mean measure ${\mathcal L}^2$. 
 With $Z$  and $Z_n$ as in \eqref{identrand} and \eqref{zedn} for this restricted domain $\Pi$, $\E\big(\|Z_n -Z\|_\infty\big)\to 0$ as $n\to \infty$ with  
\be
\E\big(\|Z_n -Z\|_\infty\big) \leq  
\frac{2b(t_1-t_0)}{1-b}\exp\bigg(2M(t_1-t_0)\int_K^\infty  y^{-1/(a,b)} \, dy\bigg) 
 n^{-(1-b)/b}. 
\label{diffbound1}
\ee

\end{theo}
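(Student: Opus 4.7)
The plan is to combine the deterministic error estimate from Corollary \ref{estep} with Campbell's theorem, exploiting the fact that the two factors in the bound depend on the Poisson process restricted to disjoint regions of the plane and are therefore independent.

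First, I would apply the sharper version of Corollary \ref{estep}, namely \eqref{errest0}, to the realisation-wise bound
\[
\|Z_n-Z\|_\infty \ \leq\ \prod_{(\X,\Y)\in\Pi,\,K\leq|\Y|\leq n}\bigl(1+M|\Y|^{-1/(a,b)}\bigr)\ \cdot\ \sum_{(\X,\Y)\in\Pi,\,|\Y|>n}|\Y|^{-1/b}.
\]
(One may assume $n\geq K$ throughout, the finitely many smaller $n$ being trivial.) The product is a function of $\Pi\cap\bigl((t_0,t_1)\times\{K\leq|y|\leq n\}\bigr)$ and the sum is a function of $\Pi\cap\bigl((t_0,t_1)\times\{|y|>n\}\bigr)$; since these two Borel sets are disjoint, the two random variables are independent by the defining property of a Poisson point process, so the expectation factorises.

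For the sum factor, Campbell's theorem (Theorem \ref{camp}, first formula) gives
\[
\E\Bigl[\sum_{|\Y|>n}|\Y|^{-1/b}\Bigr]\ =\ \int_{t_0}^{t_1}\!\!\int_{|y|>n}|y|^{-1/b}\,dy\,dx\ =\ 2(t_1-t_0)\int_n^\infty y^{-1/b}\,dy\ =\ \frac{2b(t_1-t_0)}{1-b}\,n^{-(1-b)/b},
\]
using $1/b>1$. For the product factor, I rewrite it as $\exp\bigl(\sum\log(1+M|\Y|^{-1/(a,b)})\bigr)$ and apply the second (exponential) formula of Campbell's theorem with $f(x,y)=\log(1+M|y|^{-1/(a,b)})$, which is chosen precisely so that $e^{f}-1=M|y|^{-1/(a,b)}$. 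This yields
\[
\E\Bigl[\prod_{K\leq|\Y|\leq n}\bigl(1+M|\Y|^{-1/(a,b)}\bigr)\Bigr]\ =\ \exp\Bigl(2M(t_1-t_0)\int_K^n y^{-1/(a,b)}\,dy\Bigr)\ \leq\ \exp\Bigl(2M(t_1-t_0)\int_K^\infty y^{-1/(a,b)}\,dy\Bigr),
\]
where the integral on the right is finite: from \eqref{ydif1}, $y^{-1/(a,b)}$ behaves like $y^{-1/b}(1+|\log y|)$ at infinity, which is integrable since $1/b>1$, while the lower limit $K>0$ removes any singularity at the origin. Multiplying the two bounds gives \eqref{diffbound1}, and since the prefactor is a finite constant independent of $n$ while $n^{-(1-b)/b}\to 0$, convergence of $\E\bigl(\|Z_n-Z\|_\infty\bigr)$ to $0$ follows.

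The only subtle step is the independence argument: once one recognises that the product and the residual sum live on disjoint horizontal strips of the plane, the factorisation of the expectation is immediate, and the clever choice of $f$ for the exponential version of Campbell's theorem turns the product bound into exactly the linear integral that appears in the statement. The rest is routine integration of $y^{-1/b}$ and estimation against $y^{-1/(a,b)}$.
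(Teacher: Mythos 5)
Your argument is correct and essentially identical to the paper's: both take the realisation-wise bound \eqref{errest0} with $f_n=Z_n$, $f=Z$, factorise the expectation using independence of the Poisson process on the disjoint strips $\{K\leq|y|\leq n\}$ and $\{|y|>n\}$, and evaluate the two factors via the linear and exponential forms of Campbell's theorem (writing the product as $\exp\sum\log(1+M|\Y|^{-1/(a,b)})$), then enlarge $\int_K^n$ to $\int_K^\infty$ and integrate $y^{-1/b}$ to obtain \eqref{diffbound1}. Nothing further is needed.
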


\begin{proof}
This follows by setting  $f_n = Z_n$ and $f=Z$ in \eqref{errest0} and  taking the expectation. Then, for $n>K$, using  the independence of the  Poisson point process $\Pi$ on  $(t_0,t_1)\times [-n, -K] \cup [K,n]$ and $(t_0,t_1)\times  (-\infty, -n) \cup (n,\infty)$, and Theorem \ref{camp},	
\begin{align}
\E\big(\|Z_n &-Z\|_\infty\big) 
\ \leq\ 
\E\Big( \prod_{(\X,\Y) \in \Pi: |\Y|\leq n } \big(1+M|\Y|^{-1/(a,b)}\big)\sum_{(\X,\Y)\in \Pi  : |\Y|> n} |\Y|^{-1/b}\Big)\nonumber\\
&=\ 
\E\Big( \prod_{(\X,\Y) \in \Pi: |\Y|\leq n } \big(1+M|\Y|^{-1/(a,b)}\big)\Big)\ 
\E\Big(\sum_{(\X,\Y)\in \Pi  : |\Y|> n} |\Y|^{-1/b}\Big)\nonumber\\
&=\ 
\E\Big( \exp \sum_{(\X,\Y) \in \Pi: |\Y|\leq n }\log \big(1+M|\Y|^{-1/(a,b)}\big)\Big)\ 
\E\Big(\sum_{(\X,\Y)\in \Pi  : |\Y|> n} |\Y|^{-1/b}\Big)\nonumber\\
&=\ 
\exp\bigg(2(t_1-t_0)\int_K^n  M y^{-1/(a,b)} dy\bigg) 
2(t_1-t_0) \int_n^\infty  y^{-1/b}  dy
 \label{campap}
 \end{align}
 and letting $n\to \infty$ in the first integral and evaluating the second integral gives \eqref{diffbound1}.
\end{proof}

\subsection{Local properties of random functions and self-stabilizing processes}

Not only are the sample paths of the process given by Theorem \ref{thmrand} right-continuous, but we can obtain a local H\"{o}lder-type continuity estimate. We will show this by comparison with the 
$\alpha$-stable subordinator $S_\alpha $, for constant $0<\alpha<1$, which may be expressed as an almost surely convergent sum over a plane Poisson point process with mean measure ${\mathcal L}^2$ as 
$$
S_\alpha (t) \ :=\ \sum_{(\X,\Y) \in \Pi} 1_{(0 ,t]}(\X)\, |\Y|^{-1/\alpha}.
$$
Then $S_\alpha $ is a self-similar process with stationary increments such that for all $0<\epsilon<1/\alpha$ there is almost surely a random constant $C<\infty$ such that
\be\label{sub}
S_\alpha (t) \   \leq \ C t^{(1/\alpha)\,-\,\epsilon}  \qquad (t\geq 0),
\ee
 see for example \cite[Section III.4]{Ber} or \cite{Tak}.

\begin{prop}\label{cty}
Let $Z$ be the random function given by Theorem \ref{thmrand}. Then, given $0<\epsilon<1/b$, for each $t\in [t_0,t_1)$ there exists almost surely a random $C>0$  such that   for all $0<h <t_1-t$,
\be\label{hol}
|Z(t+h) -Z(t)|  \leq C h^{1/\alpha(Z(t))\,-\,\epsilon}.
\ee
\end{prop}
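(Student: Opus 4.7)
Fix $t \in [t_0, t_1)$ and $\epsilon \in (0, 1/b)$, and set $\beta := \alpha(Z(t)) \in [a, b]$. The strategy is that for sufficiently small $h > 0$, all jumps of $Z$ on $(t, t+h]$ come from points $(\X, \Y) \in \Pi$ with $|\Y| > 1$, so that $Z(\X_-)$ remains close to $Z(t)$ and the variable exponent $\alpha(Z(\X_-))$ can be replaced by a constant slightly exceeding $\beta$, allowing comparison with a standard stable subordinator.

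Since the Poisson mean measure of $(t, t+h] \times [-1, 1]$ is $2h$, almost surely there exists a random $h_\omega > 0$ with $\Pi \cap ((t, t+h_\omega] \times [-1,1]) = \emptyset$. For $\X \in (t, t+h_\omega]$ every point $(\X', \Y') \in \Pi$ contributing to $Z(\X_-) - Z(t)$ satisfies $|\Y'| > 1$ and hence $|\Y'|^{-1/\alpha(Z(\X'_-))} \leq |\Y'|^{-1/b}$, so
\[
|Z(\X_-) - Z(t)|\ \leq\ \sum_{(\X', \Y') \in \Pi} 1_{(t, \X)}(\X')\,1_{|\Y'| > 1}\,|\Y'|^{-1/b}\ \leq\ S_b(\X - t),
\]
where $S_b$ is a $b$-stable subordinator built from the shifted process $\Pi$ (extended by an auxiliary independent Poisson process if needed). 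Applying \eqref{sub} yields an a.s.\ finite random $C_1$ with $|Z(\X_-) - Z(t)| \leq C_1 (\X - t)^{1/b - \epsilon'}$ for $\X \in (t, t+h_\omega]$, any fixed $\epsilon' \in (0, 1/b)$. Pick a deterministic $\delta > 0$ depending only on $a$ and $\epsilon$ so small that $1/(\beta + 2\delta) \geq 1/\beta - \epsilon/2$ for every $\beta \in [a, b]$ (e.g.\ $\delta = a^2 \epsilon / 8$); the Lipschitz continuity of $\alpha$ and the previous bound then produce a random $h_* \in (0, h_\omega]$ with $|\alpha(Z(\X_-)) - \beta| < \delta$ for all $\X \in (t, t+h_*]$.

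Let $\Gamma \subset \mathbb{Q} \cap (a, 1)$ be countable and dense; applying \eqref{sub} to each $q \in \Gamma$ and intersecting the countable family of a.s.\ events gives, almost surely, finite random constants $\{C_q : q \in \Gamma\}$ with $S_q(s) \leq C_q s^{1/q - \epsilon/2}$ for all $s \geq 0$ and $q \in \Gamma$. For $h \in (0, h_* \wedge 1]$ pick $q = q(\omega) \in \Gamma$ with $\beta + \delta \leq q \leq \beta + 2\delta$. Since $\alpha(Z(\X_-)) \leq \beta + \delta \leq q$ for $\X \in (t, t+h]$ and every contributing point has $|\Y| > 1$, we get $|\Y|^{-1/\alpha(Z(\X_-))} \leq |\Y|^{-1/q}$, and so
\[
|Z(t+h) - Z(t)|\ \leq\ \sum_{(\X, \Y) \in \Pi} 1_{(t,t+h]}(\X)\,|\Y|^{-1/q}\ \leq\ S_q(h)\ \leq\ C_q\,h^{1/q - \epsilon/2}\ \leq\ C_q\,h^{1/\beta - \epsilon}.
\]
For $h \in [h_* \wedge 1,\, t_1 - t)$ the cadlag process $Z$ is bounded on $[t_0, t_1)$ (it has a left limit at $t_1$), so $|Z(t+h) - Z(t)| \leq 2 \|Z\|_\infty$, and enlarging $C$ to absorb this bounded range delivers \eqref{hol} on the whole of $(0, t_1 - t)$.

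The main technical hurdle is handling the randomness of the exponent $\beta = \alpha(Z(t))$: the subordinator estimate \eqref{sub} is almost-sure for each fixed index, so the argument must secure such bounds simultaneously for a countable collection of indices before a random $q(\omega)$ can be selected. Combined with the small-jump exclusion of the first step, which is what lets one localize $\alpha(Z(\X_-))$ about $\beta$, this is what makes the comparison with a subordinator of index close to $\beta$ (rather than the crude choice $b$, which would yield a strictly weaker exponent) available.
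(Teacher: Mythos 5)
Your proof is correct and follows essentially the same strategy as the paper: use right-continuity of $Z$ at $t$ (together with the regularity of $\alpha$) to keep $\alpha(Z(\X_-))$ close to $\alpha(Z(t))$ on a short interval, then dominate the increment by a stable subordinator of index near $\alpha(Z(t))$ and invoke \eqref{sub}. Two implementation differences are worth recording. First, you discard an initial window containing no points with $|\Y|\leq 1$ (possible a.s.\ since $(t,t+h]\times[-1,1]$ has finite mean measure), so the per-term comparison $|\Y|^{-1/\alpha(Z(\X_-))}\leq|\Y|^{-1/q}$ holds exactly for an index $q$ slightly \emph{above} $\alpha(Z(t))$; the paper instead compares with exponent $1/\alpha(Z(t))+\epsilon/2$, i.e.\ an index slightly \emph{below} $\alpha(Z(t))$, absorbing the mismatch on the $|\Y|>1$ terms into a random constant $C_2$. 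Second, you handle the randomness of the comparison index by securing the bound \eqref{sub} simultaneously for a countable dense set of rational indices and then selecting $q(\omega)$; the paper applies \eqref{sub} directly at the random index $\alpha(Z(t))$ without comment, so your device actually tightens that step. One small repair is needed: your sample choice $\delta=a^2\epsilon/8$ does not ensure $\alpha(Z(t))+2\delta<1$ (take $a,b$ near $1$ and $\epsilon$ near $1/b$), and if $\alpha(Z(t))+\delta\geq1$ there is no admissible $q\in\Gamma\subset(a,1)$, nor would the comparison sum $S_q$ converge for $q\geq1$; simply impose in addition $\delta<(1-b)/2$, e.g.\ $\delta=\min\{a^2\epsilon/8,(1-b)/3\}$, which costs nothing since the only constraints used are $1/(\alpha(Z(t))+2\delta)\geq 1/\alpha(Z(t))-\epsilon/2$ and $q<1$.
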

\begin{proof}
By \eqref{sub}, using that the subordinator has stationary increments,  there is an almost surely finite $C_1$ such that for all $0\leq h\leq t_1-t$,
$$\sum_{(\X,\Y) \in \Pi} 1_{(t,t+h]}(\X)\, |\Y|^{-1/\alpha(Z (t))\,-\,\epsilon/2}\ \leq\ C_1 h^{1/\alpha(Z (t))\,-\,\epsilon}.$$
Since $Z$ is almost surely right-continuous at $t$ and $\alpha$ is continuous, there is, almost surely, a random $0<H_0< \max\{t_1-t,1\}$ such that if $0\leq h \leq H_0$ then $|1/\alpha(Z(t+h)) - 1/\alpha(Z(t))|<\epsilon/2$.
Thus from \eqref{identrand}, almost surely, if $0\leq h\leq H_0$ then
\begin{eqnarray*}
|Z(t+h) -Z(t)| &\leq & \sum_{(\X,\Y) \in \Pi} 1_{(t,t+h]}(\X)\, |\Y|^{-1/\alpha(Z (\X_-))}\\
&\leq & C_2\sum_{(\X,\Y) \in \Pi} 1_{(t,t+h]}(\X)\, |\Y|^{-1/\alpha(Z (t))\,-\,\epsilon/2} \\
&\leq &   C_1 C_2 h^{1/\alpha(Z(t))\,-\,\epsilon}
\end{eqnarray*}
where $C_2$ is a random constant. By increasing $C_1C_2$ to a suitable value $C$ we can ensure that  \eqref{hol} holds for all $0\leq h<t_1-t$.
\end{proof}

We next show that near a time $t$, the random function $Z$ `looks like' an $\alpha$-stable process. Recall that a process $W$ is {\it localizable} at $t$ with a process $Y$ as its {\it local form}  if 
\begin{equation}\label{loc}
\frac{W(t+ru) -W(t)}{r^{1/\alpha}} \  \to \ Y(u)\qquad (u\in I)
\end{equation}
as $r\searrow 0$, where $I$ is an interval containing $0$ and convergence is in finite dimensional distributions. 
We say $W$ is {\it strongly localizable} if the convergence is in distribution with respect to an appropriate metric on the function space, see for example \cite{Fal6, FL}.
In our case, with the nature of $Z$ near $t$ depending on  $Z(t)$, it only makes sense to consider limits in \eqref{loc} for $u\geq 0$, in which case we refer to the process as {\it right-localizable}.

We will show that the process $Z$ is right-localizable at each $t$ with local form  an $\alpha(Z(t))$-stable process, so that $Z$ may indeed be thought of as self-stablizing. We write $L^0_\alpha $ for the non-normalized $\alpha$-stable process, which has a representation
\begin{equation}\label{sumnnx}
L^0_\alpha  (t) =\sum_{(\X,\Y) \in \Pi} 1_{(0,t]}(\X) \Y^{\langle-1/\alpha\rangle}\qquad (t\geq 0).
\end{equation}
As before $\mathcal{F}_t$ is the $\sigma$-field underlying the point process $\Pi \cap \big((t_0 ,t]\times \mathbb{R}\big)$.

\begin{theo}\label{rtlocx}
Let $Z$ be the process given by Theorem \ref{thmrand}. Then $Z$ is strongly right-localizable at each $t\in [t_0,t_1)$, in the sense that
\be\label{distlocx}
\frac{Z(t+ru) -Z(t)}{r^{1/\alpha(Z (t))}}\bigg|\,  \mathcal{F}_t \  \tod \ L^0_{\alpha(Z (t))}(u)
\ee
as $r\searrow 0$, where convergence is in distribution with respect to $(D[0,t_1), \rho_S)$, where $ \rho_S$ is the Skorohod metric, and so is also convergent in finite dimensional distributions.
\end{theo}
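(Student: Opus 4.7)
The strategy is to condition on $\mathcal{F}_t$, so that $Z(t)$ and $\alpha_0 := \alpha(Z(t))$ are fixed, and to compare the rescaled increment $A_r(u) := (Z(t+ru)-Z(t))/r^{1/\alpha_0}$ with the frozen auxiliary process
$$B_r(u)\ :=\ r^{-1/\alpha_0}\sum_{(\X,\Y)\in\Pi}1_{(t,t+ru]}(\X)\,\Y^{\langle-1/\alpha_0\rangle},$$
in which the variable exponent $\alpha(Z(\X_-))$ has been replaced by the constant $\alpha_0$. The plan is to prove two things: first, that $B_r \tod L^0_{\alpha_0}$ in $(D[0,T),\rho_S)$ for every $T>0$; second, that $\sup_{u\in[0,U]}|A_r(u)-B_r(u)|\to 0$ almost surely as $r\searrow 0$ for every $U>0$. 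Since uniform convergence on compacts is stronger than Skorohod convergence and the latter is preserved under addition of a process tending to zero, these two facts together yield \eqref{distlocx}.

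The first assertion is essentially a Poisson change of variables. Conditionally on $\mathcal{F}_t$, the restriction of $\Pi$ to $(t,t_1)\times\mathbb{R}$ is Poisson with mean measure $\mathcal{L}^2$, independent of $\mathcal{F}_t$. The transformation $(\X,\Y)\mapsto((\X-t)/r,r\Y)$ is $\mathcal{L}^2$-preserving, so its image is again Poisson with mean measure $\mathcal{L}^2$, now on $(0,(t_1-t)/r)\times\mathbb{R}$. Substituting and using $|\Y|^{-1/\alpha_0}=r^{1/\alpha_0}|r\Y|^{-1/\alpha_0}$, $B_r$ coincides in law with the Poisson sum \eqref{sumnnx} representing $L^0_{\alpha_0}$ on $[0,(t_1-t)/r)$; as $r\searrow 0$ the truncation recedes to infinity and the claim follows.

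For the second assertion I would apply the mean-value estimate in the stability index obtained by differentiating $\beta\mapsto y^{\langle-1/\beta\rangle}$, as in the derivation of \eqref{ydif}, to obtain for each summand
$$\bigl|\Y^{\langle-1/\alpha(Z(\X_-))\rangle}-\Y^{\langle-1/\alpha_0\rangle}\bigr|\ \le\ C\,|\alpha(Z(\X_-))-\alpha_0|\,|\Y|^{-1/(a,b)}.$$
By the Lipschitz property of $\alpha$ and the H\"older control of Proposition \ref{cty}, almost surely there exists $r_0(\omega)>0$ such that for $0<r<r_0$ and all $\X\in(t,t+rU]$ one has $|\alpha(Z(\X_-))-\alpha_0|\le C_1(rU)^{1/\alpha_0-\eta}$ with $\eta>0$ preassigned. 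Therefore
$$\sup_{u\le U}|A_r(u)-B_r(u)|\ \le\ C_2\,r^{-1/\alpha_0}(rU)^{1/\alpha_0-\eta}\!\sum_{(\X,\Y)\in\Pi}1_{(t,t+rU]}(\X)\,|\Y|^{-1/(a,b)}.$$
The remaining sum is controlled by \eqref{sub} applied to an $a$-stable subordinator, after absorbing the logarithmic enhancement in $|\Y|^{-1/(a,b)}$ by a slight perturbation of the index: it is almost surely at most $C_3(rU)^{1/a-\delta}$. Collecting exponents and using $1/a>1$ yields $\sup_{u\le U}|A_r(u)-B_r(u)|\le C_4\,U^{1/\alpha_0+1/a}\,r^{1/a-\eta-\delta}\to 0$ for $\eta,\delta$ small.

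The main obstacle is the second step: the rescaling factor $r^{-1/\alpha_0}$ must be beaten by the combined product of the H\"older decay of $|\alpha(Z(\X_-))-\alpha_0|$ and the subordinator-type bound on $\sum|\Y|^{-1/(a,b)}$. Making this rigorous demands careful bookkeeping of the logarithmic factor in the definition of $|y|^{-1/(a,b)}$ (handled by perturbing the subordinator index slightly) and uniform control in $u\in[0,U]$ of the H\"older estimate from Proposition \ref{cty} applied at the fixed base point $t$. Once these technicalities are dispatched, combining the distributional convergence of $B_r$ with the uniform smallness of $A_r-B_r$ on compact subintervals delivers the required convergence in $(D[0,T),\rho_S)$ and hence also in finite dimensional distributions.
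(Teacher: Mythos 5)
Your proposal follows essentially the same route as the paper's proof: condition on $\mathcal{F}_t$, compare the increment of $Z$ with that of the frozen process of constant index $\alpha_0=\alpha(Z(t))$ built over the same Poisson points, control the difference uniformly via the mean value estimate \eqref{ydif}, the H\"older bound of Proposition \ref{cty} and the subordinator estimate \eqref{sub}, identify the rescaled frozen increment in law with $L^0_{\alpha_0}(u)$ (you via the measure-preserving scaling of the Poisson process, the paper via self-similarity and stationary increments of the stable process), and conclude with the converging-together theorem. One small slip: the sum $\sum_{(\X,\Y)\in\Pi}1_{(t,t+rU]}(\X)\,|\Y|^{-1/(a,b)}$ cannot be dominated by an $a$-stable subordinator, because for $|\Y|>1$ the summand is of order $|\Y|^{-1/b}\log|\Y|$, which exceeds any $|\Y|^{-1/a-\delta}$ jump; splitting at $|\Y|=1$ yields the correct almost sure bound of order $(rU)^{1/b-\delta}$ (as in the paper's estimate), and since $1/b-\eta-\delta>0$ this does not affect your conclusion.
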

\begin{proof}
Let $t\in [0,t_1)$; throughout this proof we condition on $ \mathcal{F}_t $. Let $u \in [0,1]$ and let $0<r<t_1-t$. 
We  compare $Z(t+ru) -Z(t)$ and $L^0_{\alpha(Z (t))}(t+ru)-L^0_{\alpha(Z (t))}(t)$,  defined with respect to the same  Poisson point process $\Pi$ with mean measure $\mathcal{L}^2$. Let $0<\epsilon<1/(3b)$. Then for $u \in [0,1]$ and $0<r<t_1-t$,
\begin{align*}
\big|\big(Z(t+ & ru)  -Z(t)\big) -\big(L^0_{\alpha(Z (t))}(t+ru)-L^0_{\alpha(Z (t))}(t)\big)\big| \\
&=\ \Big| \sum_{(\X,\Y) \in \Pi} 1_{(t,t+ru]}(\X)\, \Y^{\langle-1/\alpha(Z  (\X_-))\rangle}  -   \sum_{(\X,\Y) \in \Pi} 1_{(t,t+ru]}(\X) \, \Y^{\langle-1/\alpha(Z(t))\rangle}  \Big| \\
&\leq\ \sum_{(\X,\Y) \in \Pi} 1_{(t,t+ru]}(\X)\, \big|\Y^{\langle-1/\alpha(Z  (\X_-))\rangle}- \Y^{\langle-1/\alpha(Z(t))\rangle}\big|\\
&\leq\ M\sum_{(\X,\Y) \in \Pi} 1_{(t,t+ru]}(\X)\,\big|Z (\X_-) - Z(t) \big|  |\Y|^{-1/(a,b) } \\
&\leq\ M C_1 r^{1/\alpha(Z(t))\,-\,\epsilon} \,  \sum_{(\X,\Y) \in \Pi} 1_{(t,t+ru]}(\X)\, |\Y|^{-1/b \, -\epsilon} \\
&\leq\ MC_1 r^{1/\alpha(Z(t))\,-\,\epsilon}C_2 (ru)^{1/b\,-\,2\epsilon}\\
& \leq C_3 r^{1/\alpha(Z(t))+1/b\,-\,3\epsilon}
\end{align*}
where $C_1,C_2,C_3$ are almost surely finite random constants. Here we have used \eqref{ydif}, inequality \eqref{hol}, and \eqref{sub} noting that the final sum is a stable subordinator. Thus, almost surely, there is a finite random constant $C_3$ such that  for $u\in [0,1]$   and $0<r<t_1-t$,
\begin{eqnarray*}
\bigg\| \frac{Z(t+  ru) - Z(t)}{r^{1/\alpha(Z(t))}} -\frac{L^0_{\alpha(Z (t))}(t+ru)-L^0_{\alpha(Z (t))}(t)}{ r^{1/\alpha(Z(t))}}\bigg\|_\infty 
&\leq&
C_3 \frac{r^{1/\alpha(Z(t))+1/b\,-\,3\epsilon}}{ r^{1/\alpha(Z(t))}}\\
&=& C_3  r^{1/b\, -3\epsilon} \to 0
\end{eqnarray*}
almost surely as $r\searrow 0$. In particular, as $\|\cdot\|_\infty$ dominates $\rho_S$ on $D[0,t_1)$,
$$ \rho_S \bigg( \frac{Z(t+  ru) - Z(t)}{r^{1/\alpha(Z(t))}}, \frac{L^0_{\alpha(Z (t))}(t+ru)-L^0_{\alpha(Z (t))}(t)}{ r^{1/\alpha(Z(t))}}    \bigg) \to 0$$
almost surely and in probability. Using scaling and stationary increments of the $\alpha(Z (t))$-stable process,
$$\frac{L^0_{\alpha(Z (t))}(t+ru)-L^0_{\alpha(Z (t))}(t)}{ r^{1/\alpha(Z(t))}} 
\ \ed \  L^0_{\alpha(Z (t))}(u)-L^0_{\alpha(Z (t))}(0) \ed L^0_{\alpha(Z (t))}(u),$$
so we conclude, using \cite[Theorem 3.1]{Bil} to combine convergence in probability and in distribution, that  
$$\frac{Z(t+  ru) - Z(t)}{r^{1/\alpha(Z(t))}}\bigg|\,  \mathcal{F}_t \  \tod \ L^0_{\alpha(Z (t))}(u)$$
 as $r\searrow 0$. Convergence in finite dimensional distributions is an immediate consequence.

\end{proof}

\subsection{Some variants}\label{sec3.2}

{\it Weighted case} \ \ As in the deterministic case, these results may be extended to include a weight function  $w: [a,b] \to \mathbb{R}^+$ that is bounded and continuously differentiable with bounded derivative. Thus a refinement of Theorem \ref{thmrand} gives a process $Z$ with sample paths in   $D[t_0,t_1)$ such that 
\begin{equation}\label{randwt}
Z (t) = \sum_{(\X,\Y) \in \Pi} 1_{(0,t]}(\X)\,w\big(\alpha(Z  (\X_-))\big)\,   \Y^{\langle-1/\alpha(Z  (\X_-))\rangle}. 
\end{equation}
In particular, taking $w(\alpha) = C_{\alpha}$ to be the normalizing constant for $\alpha$-stable L\'{e}vy motion \eqref{calpha}, there is a process satisfying
$$Z (t) = \sum_{(\X,\Y) \in \Pi} 1_{(0,t]}(\X)\,C_{\alpha(Z  (\X_-))}\,   \Y^{\langle-1/\alpha(Z  (\X_-))\rangle},$$
which, by the same arguments used to prove Theorem \ref{rtlocx}, satisfies
$$\frac{Z(t+  ru) - Z(t)}{r^{1/\alpha(Z(t))}}\bigg|\,  \mathcal{F}_t \  \tod \ L_{\alpha(Z (t))}(u)$$
as $r\searrow 0$, where $L_\alpha$ is standard (normalized) $\alpha$-stable L\'{e}vy motion. 
\medskip

\noindent{\it Tempered self-stabilizing processes} \ \ 
For certain applications, it is essential that the stochastic processes used
for modelling possess an expectation or even have finite variance. This
is particularly the case in financial engineering, where pricing typically
amounts to taking expectations. Because of the constraint $b<1$, the processes
defined in the previous sections do not meet these requirements.
Popular models in financial applications that retain some of the useful 
properties of stable processes but possess finite moments of all orders are
ones belonging to the class of  {\it tempered stable processes}
\cite{JR}. Well-known instances include the Variance-Gamma and the CGMY 
processes \cite{CGMY}.
A tempered version of self-stabilizing process may easily be constructed
using the theory developed above.

Let us briefly recall the shot noise representation of tempered stable processes
given in \cite{JR}. We need the following ingredients:
\begin{itemize}
\item $(\Y_i)_{i \geq 1}$ is a sequence of arrival times of a Poisson process 
with unit mean arrival time, 
\item $(\X_i)_{i \geq 1}$ is a sequence of i.i.d. random
variables with uniform distribution on $(0,t_1)$, 
\item $(\eta_i )_{i \geq 1}$ is a sequence of i.i.d. random
variables with distribution $\P(\eta_i = 1) = \P(\eta_i = -1) 
= 1/2$,
\item $(E_i )_{i \geq 1}$ is a sequence of exponential distributed i.i.d. random
variables with parameter 1,
\item $(U_i)_{i \geq 1}$ is a sequence of i.i.d. random
variables with uniform distribution on $[0,1]$.
\end{itemize}
All these sequences are assumed to be independent. Applying  \cite[Theorem 5.3]{JR} 
to our particular case,
\begin{equation}\label{tempal}
TL(t) = \sum_{i\geq 1}  1_{(0,t]}(\X_i)\, \bigg(\left(\frac{\alpha \Y_i}{t_1}   \right)^{-1/\alpha} \wedge E_i U_i^{1/\alpha}\bigg) \, \eta_i, 
\end{equation}
where $x \wedge y$ denotes the minimum of $x$ and $y$, is a symmetric 
tempered stable motion on $[0,t_1]$.
Note that \eqref{tempal} essentially has the form of \eqref{sum} modified by the tempering term, observing that  $\Pi = \{(\X_i, \Y_i \eta_i/2t_1)\}_{i\geq 1}$ has the distribution of a Poisson point process on $(0,t_1) \times \mathbb{R}$ and that $\alpha^{-1/\alpha}$ is simply a normailzation constant.

The settings of Sections \ref{sec:deter} and \ref{sec:stoch} are easily 
adapted to deal with tempering. For $\alpha: \mathbb{R} \to [a,b]  \subset (0,1)$  and a sequence of isolated
points $(x_i,y_i,e_i,u_i,\varepsilon_i)_i \in 
(t_0,t_1) \times \left(\mathbb{R}^+\right)^2 \times [0,1] \times \{-1,1\}$ 
such that  $(y_i)_i$ is  increasing  with $\sum_{i\geq 1} y_i^{-1/b} < \infty$, set
\begin{equation}\label{tempidentint}
f(t) = a_0 +\sum_{i\geq 1} 1_{(t_0,t]}(x_i) 
\bigg(\left(\frac{\alpha(f(x_{i-})) y_i}{t_1-t_0}\right)^{-1/\alpha(f(x_{i-}))} 
\wedge e_i u_i^{1/\alpha(f(x_{i-}))}\bigg) \, \eta_i 
\end{equation}
for $t \in [t_0,t_1)$.
Then the series is absolutely convergent, and an inspection of the proofs of Theorems \ref{finlim} and  \ref{thmdet} reveals that the same
steps can be followed with little modification. The only notable change is
that the function $z \mapsto \Big(\left(\frac{\alpha(z)) y_i}{t_1-t_0}\right)^{-1/\alpha(z))}\wedge e_i u_i^{1/\alpha(z))}\Big)$ is not necessarily differentiable.
However, as the pointwise minimum of two Lipschitz functions, it is again
Lipschitz with Lipshitz constant the maximum of the norm of the derivatives
of the two functions involved, which is all we need. Thus there
exists a unique function $f$ satisfying \eqref{tempidentint}.

Moving to the stochastic case, and choosing $(\Y_i)_{i \geq 1}, 
(\X_i)_{i \geq 1}, (\eta_i)_{i \geq 1}, (E_i)_{i \geq 1}$ and $(U_i)_{i \geq 1}$
as above, we find there exists a Markov process satisfying:
$$TZ(t) = \sum_{i\geq 1}  1_{(0,t]}(\X_i)\, \bigg(\left(\frac{\alpha(Z(\X_{i-})) \Y_i}{T}   \right)^{-1/\alpha(Z(\X_{i-}))} \wedge E_i U_i^{1/\alpha(Z(X_{i-}))}\bigg) \, \eta_i.$$
The proof of Proposition \ref{cty} adapts without any modification, so that
$TZ$ is also almost surely right-$\beta$-H\"{o}lder continuous for all $0< \beta<1/\alpha(Z(t))$.

\section{Simulation}
\setcounter{equation}{0}
\setcounter{theo}{0}

\subsection{Difficulties with simulation}

Simulation of these processes $Z$ is fraught with difficulties.
Methods for simulating paths of random process are usually based
on the joint probability distribution function or the joint
characteristic function. Such probabilistic properties of $Z$ are not 
known at this time. This strongly
restricts the tools available for simulation and
the only method that can be used at this stage is based on the Poisson
point representations \eqref{identrand} and \eqref{zedn}

In general, using series representations to simulate stable random
variables is not considered a practical method because convergence 
is rather slow \cite[p. 26]{Bk_Sam}. A further complication arises in our case, even for calculating deterministic jump functions defined by summation over point sets described in Section 2. One might hope that the error in approximating $f$ in \eqref{ident} by $f_n$ in \eqref{identfin} obtained by restricting the sum to $(x,y) \in \Pi$ with $|y|\leq n$ would be of the order
$$\Big| \sum_{ (x,y) \in \Pi \, :\,  |y|> n}  y^{\langle-1/\alpha(f_n(x_-))\rangle}\Big|
\leq  \sum_{ (x,y) \in \Pi \, :\,  |y|> n} |y|^{-1/b}.$$
Nevertheless, this need not be the case.  For   $m>n$ the differences between $f_m$ and $f_n$ are not just due to the additional summands in $f_m$. If $(x,y)\in \Pi$ with $|y|\leq n $ and there is some $(x',y')\in \Pi$ with $x' <x$ and $n<|y'|\leq m$, then $\alpha(f_n(x_-))$ and $\alpha(f_m(x_-))$ are likely to differ, in which case the terms $y^{\langle-1/\alpha(f_n(x_-))\rangle}$ and $y^{\langle-1/\alpha(f_m(x_-))\rangle}$ may differ enormously if $y$ is small. Thus points $(x,y)\in \Pi$ with $y$ small can lead to unexpectedly large changes when $n$ is incremented in \eqref{identfin}.  Thus whilst  $f_n$ converges uniformly to $f$, convergence may be much slower than one might hope.
Clearly this phenomenum will also be present in the random function $Z$ in \eqref{identrand}. The calculation of Corollary  \ref{estep} and estimate of Theorem \ref{thmrand3} provides some control of this effect. 

A further complication of any simulation based on Poisson sums is that error estimates will inevitably depend on the rate of convergence of the sums, which will depend on the particular realisation of the point distribution. For a Poisson point process $\Pi$ on  $(t_0,t_1)\times \mathbb{R}$ with Lebesgue measure ${\mathcal L}^2$ as mean measure, $\# \{(\X,\Y) \in \Pi : |Y| \leq n\} \leq C n$ for all $n$ for some random $C< \infty $ almost surely. However, $C$ cannot be determined by sampling any bounded set of points. Thus the best that can be hoped for is to simulate an approximation depending on a bounded set of $(\X,\Y)$ in such a way that there is a high probability that this will differ from the random function by at most a prescribed small amount.
We will show below how to find a value of $n$ required to ensure  that $\|Z_n-Z\|_\infty <\epsilon $ with prescribed probability. However, such an $n$ will be extremely large for practical values. 
At this time we recognize that no practical method exists to simulate quickly these processes with high precision.

\subsection{An approach to simulation}\label{sec5.2}
We first assume that we do not allow points  $(\X,\Y)\in \Pi$ such that $|\Y|$ is too small. Thus, as in Theorem \ref{thmrand3}, we make the assumption that $|\Y|\geq K$ for some $K>0$; in other words we run the Poisson process on 
$(t_0,t_1)\times (-\infty, -K] \cup [K,\infty)$. We can then apply Markov's inequality to  \eqref{diffbound1} to estimate the value of $n$ required. For convenience we take $[t_0,t_1] = [0,T]$.
In the setting we propose the following procedure to obtain a number $N(\epsilon)$ such that 
\be\label{nep}
\P(\|Z_{N(\epsilon)}- Z\|_\infty <\epsilon) \ > \ 1-\epsilon
\ee
where $Z_n$ is given by \eqref{zedn}, so that $Z$ may be simulated by the approximation $Z_{N(\epsilon)}$.
\medskip

$\bullet$  Given $\alpha: \mathbb{R} \to (0,1)$ find the optimal $a,b$ such that $\alpha(\xi)\in [a,b]$ for all $\xi$. Let $M = \sup_{\xi \in \mathbb{R}} |\alpha'(\xi)|/\alpha(\xi)^2$ (it may be enough just to consider $\xi$ ranging over a subinterval of $\mathbb{R}$ here).
\medskip

$\bullet$  Provided $K\geq 1$, \eqref{diffbound1} implies that 
\begin{eqnarray}
\E\big(\|Z_n -Z\|_\infty\big) &\leq &\frac{2bT}{1-b} \exp \bigg(2TM\int_K^\infty y^{-1/b}\log y\, dy\bigg) n^{-(1-b)/b}\nonumber\\
&= & \frac{2bT}{1-b} \exp \bigg(2TM\Big( \frac{b}{1-b}\log K  +\Big(\frac{b}{1-b}\Big)^2 K^{-(1-b)/b}      \Big)\bigg)\, n^{-(1-b)/b}.\qquad \label{bound1}
\end{eqnarray} 
Use this estimate to choose $n = N(\epsilon)$ so that  $\E\big(\|Z_n -Z\|_\infty\big)< \epsilon^2$. Then , 
$$\P(\|Z_{N(\epsilon)}- Z\|_\infty \geq \epsilon)\ <\ \E\big(\|Z_{N(\epsilon)} -Z\|_\infty\big)/\epsilon \ <\ \epsilon.$$

$\bullet$  Now find a realisation of a Poisson point process $\Pi$ with ${\mathcal L}^2$ as mean measure on $(0,T)\times (-N(\epsilon), -K] \cup [K,N(\epsilon))$. To do this we may make use of the following well-known property of the 
Poisson process \cite[p.62]{Dig},\cite{LS}: the $y$-coordinates of points $(\X,\Y)\in \Pi$ in the
semi-infinite strip $(0,T) \times\mathbb{R}^+$ form a one-dimensional Poisson
process with intensity $T$. In particular, the differences between successive
increasing $\Y$  are independent realisations of an exponentially
distributed random variable with distribution function $F(v) = 1 - \exp\left(-Tv \right), \ v>0$. Thus starting at $K$ and incrementing by these exponential random variables until we get a value with $\Y > N(\epsilon)$ and taking the corresponding $\X$   independently and uniformly distributed on $(0,T)$, we get  a realisation of $\Pi$ on  $(0,T)\times  [K,N(\epsilon))$. Similarly we get a realisation of $\Pi$ on $(0,T)\times (-N(\epsilon), -K] $.
\medskip

$\bullet$  To simulate $Z_n$ given this  Poisson point process we discretise
$[0,T]$ in an uniform way so that the time step is smaller than
$\min_{1\leq K \leq N-1} \left(\X^{(K+1)} - \X^{(K)}\right)$, where $(\X^{(K)})_K$ is
the ordered sequence of $\X$ values in $\Pi$  on $(0,T)\times (-N(\epsilon), -K] \cup [K,N(\epsilon))$. This is to ensure that at most one jump may occur between successive points at which the approximating process is estimated.
Let $0=t_0 < t_1 < \ldots < t_L=T$ be the times at which we will estimate 
$Z_n$. Starting with $Z_n(0)=0$, we let $Z_n(t_k)=0$ for all $k$ such that
$t_k<\X^{(1)}$. We then set 
$Z_n(t_{k+1})={\Y^{(1)}}^{\langle-1/\alpha(Z_n(t_k))\rangle}
={\Y^{(1)}}^{\langle-1/\alpha(0)\rangle}$. We iterate this procedure until the
terminal time is reached.
\medskip

$\bullet$  If we remove the assumption that $|\Y| \geq K$ for all $(\X,\Y)$, so that $\Pi$ becomes a Poisson point process over $(0,T) \times \mathbb{R}$, we can still get an estimate for $N(\epsilon)$ such that \eqref{nep} is satisfied, but it is likely to be much larger. Since $\#\{(\X,\Y) \in \Pi :  |\Y| \leq K\}$ has  a Poisson distribution with mean $2KT$, setting $K = \log (1/(1-\epsilon)) \big/2T$ gives 
$\P (\#\{(\X,\Y) \in \Pi :  |\Y| \leq K\} = 0) <\epsilon$. Thus, proceeding as above with this  $K$, we obtain a value of $N(\epsilon)$ such that 
$$ \P(\|Z_{N(\epsilon)}- Z\|_\infty <\epsilon) \ > \ 1-2\epsilon$$
in place of \eqref{nep}.
\medskip

Whilst for certain parameters, using \eqref{bound1} will give an impossibly large estimate for $N(\epsilon)$, in other cases the values given are not unusuable. For example, taking $K=1$, and $\alpha (t)$ with $0<b<0.5$ gives
$N(\epsilon) \geq  2T(\exp(2MT))/\epsilon^2$.

\subsection{Examples}

Examples of simulated self-stabilizing processes are displayed on Figures
\ref{fig:selfstable1} and \ref{fig:selfstable2}. One remarks that the intensity
of jumps is indeed governed by the value of the process through the $\alpha$
function. In addition, the local roughness of the paths seems in visual agreement with
Proposition \ref{cty}.

\begin{figure}[htbp]
\centering
\includegraphics[width=0.8\textwidth,height=0.5\textwidth]{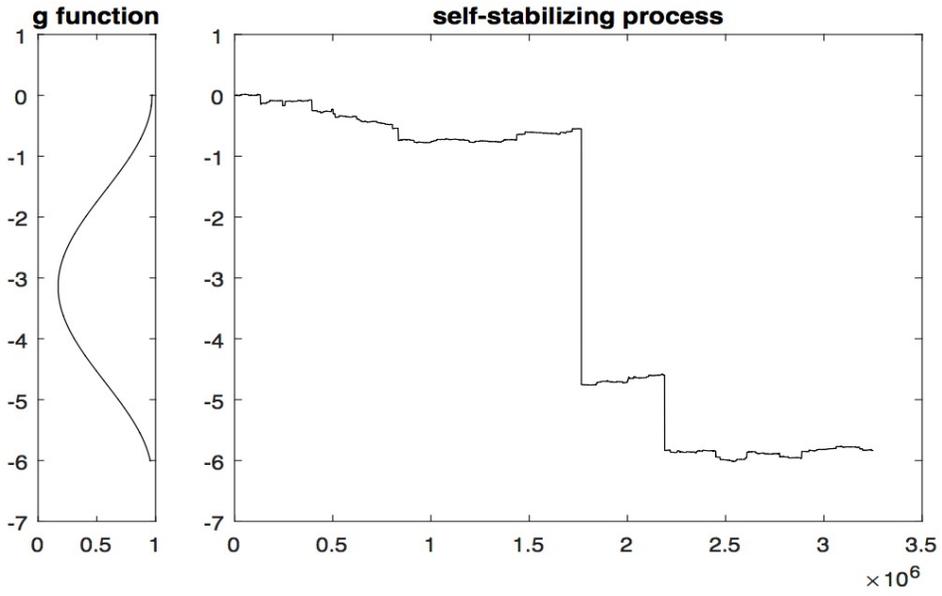}
\caption{left : self-stabilizing function $\alpha(z) = 0.57+0.4 \cos(z)$.
Right: corresponding realization of a self-stabilizing process. \label{fig:selfstable1}}
\end{figure}

\begin{figure}[htbp]
\centering
\includegraphics[width=0.8\textwidth,height=0.5\textwidth]{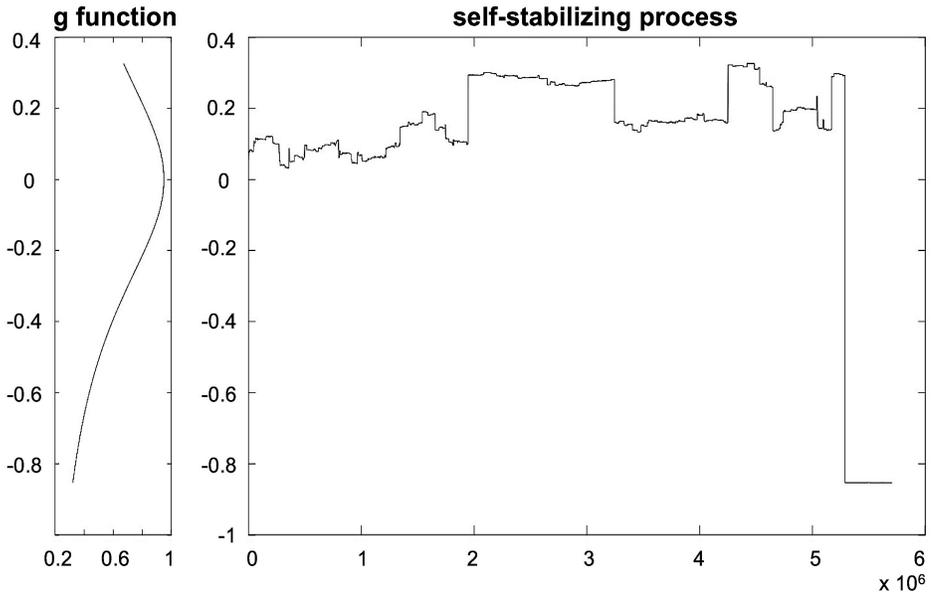}
\caption{left : self-stabilizing function $\alpha(z) = 0.15+\frac{0.8}{1+5z^2}$.
Right: corresponding realization of a self-stabilizing process. \label{fig:selfstable2}}
\end{figure}

\section*{Acknowledgements}
The authors thank the two referees for their careful reading of the paper and helpful comments. KJF gratefully acknowledges the hospitality of Institut Mittag-Leffler in  Sweden, where part of this work was carried out. JLV is grateful to SMABTP for financial support.

\bibliographystyle{plain}

\bigskip
\end{document}